\def\bt{\begin{thm}}
	\def\et{\end{thm}}
\def\bl{\begin{lem}}
	\def\el{\end{lem}}
\def\bd{\begin{defn}}
	\def\ed{\end{defn}}
\def\bc{\begin{cor}}
	\def\ec{\end{cor}}
\def\bp{\begin{proof}}
	\def\ep{\end{proof}}
\def\br{\begin{rem}}
	\def\er{\end{rem}}
\newtheorem{thm}{Theorem}[section]
\newtheorem{prop}[thm]{Proposition}
\newtheorem{lem}[thm]{Lemma}
\newtheorem{defn}[thm]{Definition}
\newtheorem{example}[thm]{Example}
\newtheorem{rem}[thm]{Remark}
\newtheorem{cor}[thm]{Corollary}
\numberwithin{equation}{section}
\title{An Exponential Rarefaction Result for\\ Sub-Gaussian Real Algebraic Maximal Curves}
\author{Turgay Bayraktar$^{1}$ and Emel Karaca$^{2}$}
\thanks{T. Bayraktar is partially supported by Turkish Academy of Sciences, GEBIP grant.}
\thanks{T. Bayraktar and E. Karaca are partially supported by T\"{U}B\.{I}TAK grant ARDEB-1001/119F184}
\address{$^1$Faculty of Engineering and Natural Sciences, Sabanc{\i} University, \.{I}stanbul, 34956 Turkey}
\email{tbayraktar@sabanciuniv.edu.tr}
\address{$^2$Polatl{\i} Faculty of Science and Arts, Ankara Hac{\i} Bayram Veli University, Ankara, 06900 Turkey}
\email{emel.karaca@hbv.edu.tr}
\date{\today}
\begin{document}
	
	\begin{abstract}
		We prove that maximal real algebraic curves associated with sub-Gaussian random real holomorphic sections of a smoothly curved ample line bundle are exponentially rare. This generalizes the result of Gayet and Welschinger \cite{GW} proved in the Gaussian case for positively curved real holomorphic line bundles.  		
	\end{abstract}
	\maketitle
	\section{Introduction}
	
	Let $X$ be a projective manifold of dimension $m$ and $L\to X$ be an ample line bundle. Assume that the real locus $\textbf{R}X$ is non-empty and $\pi:(L,c_L)\to(X,c_X)$ is a real holomorphic line bundle. The latter means that $\pi:L\to X$ is a holomorphic line bundle and the anti-holomorphic involutions $c_X$ and $c_L$ satisfy $\pi_*\circ c_L = c_X\circ \pi_*$ where $\pi_*$ denotes the differential of $\pi$. We denote the vector space of global holomorphic sections of the tensor power $L^{\otimes n}$ by $H^0(X,L^{\otimes n})$, its dimension is finite and will be denoted by $d_n$ (see eg. \cite{GH}). We also let $$\textbf{R} H^0(X,L^{\otimes n}):=\{s\in H^0(X,L^{\otimes n}): c_L\circ s=s\circ c_X\}$$ be the space of real sections whose real dimension is $d_n$. The discriminant locus (respectively, its real part) will be denoted by $\Delta_n\subset H^0(X,L^n)$ (respectively, $\textbf{R}\Delta_n$), it is the set of global holomorphic sections with singular zero locus. For $s\in H^0(X,L^n)$ we let $Z_s=s^{-1}(0)$ (respectively, $\textbf{R}Z_s$) be the complex (respectively real) zero locus of $s$. Note that for $s\in H^0(X,L^n)\setminus \Delta_n$ the hypersurface $Z_s$ is a complex manifold. For $s\in \textbf{R} H^0(X,L^{\otimes n})\setminus \textbf{R}\Delta_n$ we denote the number of connected components of the real zero locus by $b_0(\textbf{R}Z_s).$
	
Let $h$ be a singular Hermitian metric on $L$. Recall that for a holomorphic frame $e_L$ of $L$ on an open set $U\subset X$, we have $|e_L|_h=e^{-\varphi}$ for some $\varphi\in L^1_{loc}(U)$ (see e.g.\ \cite{Dem92}). The function $\varphi$ is called \textit{local weight function} of the metric $h$ with respect to the frame $e_L$. Moreover, the curvature current $c_1(L,h)_{|_U}=dd^c\varphi$ where $d=\partial + \bar{\partial}$ and $d^c=\frac{i}{2\pi}(\bar{\partial}-\partial)$. We say that a singular Hermitian metric is positive (respectively of class $\mathcal{C}^{\alpha}$) if the local weight functions are psh (respectively $\mathcal{C}^{\alpha}$) functions. Given a $\mathcal{C}^2$ Hermitian metric $h=e^{-\varphi}$ on $L$  one can define an \textit{extremal metric} $h_e:=e^{-\varphi_e}$ by taking the upper envelope of all local weight functions of positive metrics dominated by $\varphi$ (see \cite[\S3]{Ber}). More precisely, for the holomorphic frame $e_L$ on the open set $U$ we have
\begin{equation}\label{def}
\varphi_e:=\sup\{\psi\ \text{is a psh local weight}: \psi\leq \varphi\  \text{on}\ U\}.
\end{equation} 
It turns out that $h_e=e^{-\varphi_e}$ defines a singular Hermitian metric on $L$ such that its local weights are $\mathcal{C}^{1,1}$ psh functions. We denote its curvature current by $dd^c\varphi_e$; it is a globally well-defined positive closed $(1,1)$ current on $X$. By \cite{Ber}, the \textit{equilibrium measure} is defined by
	$$\mu_{\varphi_e}:=(dd^c\varphi_e)^m/m!$$  supported on the compact set
	$$S:=\overline{X_{h}^+\cap D}$$ where $D:=\{x\in X: \varphi(x)=\varphi_e(x)\}$ and the set $$X_{h}^+:=\{x\in X: dd^c\varphi(x)>0\}.$$ Note that the set $X_{h}^+$ depends only on the metric $h$ but not on the local weight $\varphi$. 
	
	The geometric data given above allow us to define a Hermitian inner product on the vector space of \textit{global holomorphic sections} $H^0(X,L^{\otimes n})$ via
	\begin{equation}\label{inp}\langle s_1,s_2\rangle:=\int_X \langle s_1(x),s_2(x)\rangle_{h^{\otimes n}} dV
	\end{equation} where $dV$ is a fixed volume form on $X$. The restriction of (\ref{inp}) to $\textbf{R} H^0(X,L^{\otimes n})$ induces an Euclidean inner product. We also denote the induced norm by $\|\cdot\|_n$. 
	
	Let $\{S_j^n\}_{j=1}^{d_n}$ be an  orthonormal basis for $\textbf{R} H^0(X,L^{\otimes n})$ with respect to the inner product (\ref{inp}). We consider \textit{sub-Gaussian random holomorphic sections} that are of the form
	$$S_n:=\sum_{j=1}^{d_n}c_j^nS_j^n$$ where $c_j^n$ are i.i.d.\ real sub-Gaussian random variables of mean zero and unit variance on some fixed probability space (see \S \ref{subG} for the definition and standart examples of sub-Gaussian random variables). This  induces a $d_n$-fold product probability measure $Prob_n$ on the vector space $\textbf{R}H^0(X,L^{\otimes n})$. We remark that unlike the Gaussian case, the probability structure in this more general setting depends on the choice of the orthonormal basis. Throughout this note, we assume that the probability distribution of $c_j$ is absolutely continuous with respect to Lebesgue measure and hence, with $Prob_n$ probability one the real zero loci $\textbf{R}Z_{s_n}$ is non-singular. We also consider the product probability space 
	\begin{equation}
	\prod_{n=1}^{\infty}\big(\textbf{R}H^0(X,L^{\otimes n}), Prob_n\big)
	\end{equation} whose elements are sequences of random real holomorphic sections. We refer the reader to the manuscripts \cite{Bay18,Bay20} for examples of local probabilistic models. 
	
	When $X$ is a projective surface, it follows from Harnack-Klein inequality that the number of real connected components of a non-singular real algebraic curve $b_0(\textbf{R}Z_s)$ is bounded by $g(Z_s)+1$. Here, $g(Z_s)$ denotes the genus of the Riemann surface $Z_s$ and it is given by the adjunction formula $g(Z_s)=\frac{1}{2} (n^{2}L^{2}-nc_{1}(X)L+2),$ where $c_{1}(X)$ is the first Chern class of $X.$ For $n \in \mathbb{N}\setminus \{0\}$ and $a >0$ we consider the set
	\begin{equation}
		M^{a}_{n}:=\{s \in {\bf{R}}H^0(X,L^n)\setminus {\bf{R}}\Delta_n : b_{0}(\textbf{R}Z_s) \geq g(Z_s) +1- an\}.
	\end{equation}
	The set $M^{a}_{n}$ is always non-empty for sufficiently large $n$ (see e.g.\ \cite{GW}). Gayet and Welschinger \cite[Theorem 1]{GW} proved that when $(L,h)$ is positively curved (i.e. the curvature form $c_1(L,h)>0$) and the random coefficients $c_j^n$ are i.i.d. standard Gaussians, the probability of the set $M^{a}_{n}$ decreases exponentially fast. More recently, Diatta and Lerario \cite{Di} (see also \cite{An}) generalized this result to higher dimensions by using a different approach: namely, with high probability for a Gaussian random real algebraic section one can approximate the real zero locus without changing its topology by real zero loci of a section associated with a lower tensor power. 
	
	In this note, we generalize \cite[Theorem 1]{GW} in three directions: we allow merely smooth Hermitian metric on the holomorphic line bundle $L$ and let the random coefficients $c_j^n$ be i.i.d. continuous sub-Gaussian random variables with mean zero and unit variance. Moreover, the exponential decay rate is far better. More precisely, we prove the following result: 
	
	\begin{thm}\label{main}
		Let $X$ be a real projective surface and $L\to X$ be a real holomorphic ample line bundle endowed with  a real $\mathcal{C}^2$ metric $h=e^{-\varphi}$. Assume that $X_{h}^+$ is non-empty. Then for each $a>0$ there exists constants $C > 0$ and $n_0\in\Bbb{N}$ such that
		\begin{equation}\label{claim1}
			Prob_n(M^{a}_{n}) \leq C n^2 e^{-n^2}
		\end{equation}
		for $n\geq n_0$.
	\end{thm}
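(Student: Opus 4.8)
The plan is to follow the architecture of Gayet--Welschinger \cite{GW} with two structural changes. Since $h$ is only $\mathcal C^2$, the global positivity used in \cite{GW} is unavailable; one instead works on the bulk $S$, using Berman's theory \cite{Ber} of the extremal weight $\varphi_e$ and of the associated (partial) Bergman kernel of $(L^n,h^n)$, whose universal concentration and off-diagonal decay at scale $n^{-1/2}$ hold on $S$ (see also \cite{Bay18,Bay20}). Also, the Gaussian computations are replaced by small-ball (anti-concentration) estimates for the sub-Gaussian vector $(c^n_j)$, which has bounded density by hypothesis; the improved rate comes from spreading the estimate over $\asymp n$ cells, each contributing a factor exponentially small in $n$. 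For reductions: by the adjunction formula $g(Z_s)=\tfrac{L^2}{2}n^2+O(n)$, so for fixed $a>0$ and $n\ge n_0(a)$ membership in $M^a_n$ forces $b_0(\textbf{R}Z_s)\ge\tfrac{L^2}{2}n^2-O(n)$, and since $M^a_n$ is a cone, $Prob_n(M^a_n)$ depends only on the law of $s/\|s\|_n$. Using $X^+_h\ne\emptyset$ and the reality of the data (so $dd^c\varphi_e$ and $D$ are $c_X$-invariant) fix a real point $p_0\in\textbf{R}X$ in the interior of $S$ with $dd^c\varphi_e(p_0)>0$ and a real chart about it; the local analysis below takes place in such charts, while the real zeros lying outside the bulk contribute only $o(n^2)$ to what follows (by Berman's equidistribution of $Z_s$ towards $\mu_{\varphi_e}$ a near-maximal curve places all but $o(n^2)$ of its ovals in the bulk, otherwise $M^a_n$ is empty for large $n$).

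\textbf{Reduction to local oval counts.} Tile the bulk part of $\textbf{R}X$ by $N_n\asymp r^{-2}n$ pairwise disjoint cells $Q_1,\dots,Q_{N_n}$ of side $\asymp r\,n^{-1/2}$, where $r>0$ is a scale parameter to be fixed at the end, and set $b^{(i)}(s):=b_0(\textbf{R}Z_s\cap Q_i)$. A connected component of $\textbf{R}Z_s$ meeting no $\partial Q_i$ lies inside a single $Q_i$, while the number of components meeting $\bigcup_i\partial Q_i$ is $O(n^{3/2})=o(n^2)$: the length of $\textbf{R}Z_s$ is $O(n)$ (Crofton together with $\int_{\gamma^{\mathbb C}}c_1(L^n)=O(n)$ along complexified geodesics $\gamma$), and each such component has length $\gtrsim r\,n^{-1/2}$. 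Combined with the reduction above this gives, for $s\in M^a_n$ and $n\ge n_0(a)$,
\[
\textstyle\sum_{i=1}^{N_n}b^{(i)}(s)\ \ge\ \tfrac{L^2}{2}\,n^2-o(n^2).
\]
Thus the only feature of $M^a_n$ used below is that a section in it must force a total of order $n^2$ ovals into these $\asymp n$ cells, whereas each $b^{(i)}(s)$ equals $O(1)$ with overwhelming probability.

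\textbf{Local large deviations.} Fix a cell $Q_i$. Rescaling by $\sqrt n$ and trivialising $L^n$ near the centre of $Q_i$ by a peak frame, $s|_{Q_i}$ becomes, up to a non-vanishing factor, a random holomorphic function whose Taylor coefficients form a fixed linear image of $(c^n_j)$. The analytic core is a uniform bound
\[
\sup_{i,\,n}\ \mathbb E_n\!\big[e^{t\,b^{(i)}(s)}\big]\ \le\ 1+C_1 r^2\qquad(0<t<c_0(r)),
\]
where $c_0(r)\to\infty$ as $r\to0$, since squeezing many ovals into an ever smaller cell is ever more improbable. This follows from a local large-deviation estimate $Prob_n\big(b^{(i)}(s)\ge k\big)\le C_1 r^2\,e^{-c_0(r)(k-1)}$, proved by observing that $\{b^{(i)}(s)\ge k\}$ forces on the order of $k$ of the lowest-order rescaled Taylor coefficients to lie simultaneously in prescribed boxes bounded away from $0$, an event of probability $\le(1-\delta)^k$ by the boundedness of the density of the $c^n_j$ and the non-degeneracy on the bulk (Bernstein--Markov) of the relevant Gram matrix. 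In the absence of rotational invariance this replaces the Gaussian computation of \cite{GW} and needs a small-ball inequality for (at most quadratic) forms in the $c^n_j$; I expect this, together with the topological bookkeeping of the previous step and with making precise that all but $o(n^2)$ of the ovals lie in the bulk, to be the principal difficulty.

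\textbf{Combining the cells.} The cells being pairwise $\gtrsim r\,n^{-1/2}$-separated, the subspaces carrying the $s|_{Q_i}$ are nearly orthogonal and can be made exactly orthogonal by a small Gram--Schmidt perturbation preserving their localization; in the Gaussian case orthogonal means independent, so $\sum_i b^{(i)}(s)$ has a product moment generating function, and in the sub-Gaussian case one recovers a near-product bound at a cost $e^{o(n^2)}$ (e.g.\ via a quantitative multivariate central limit comparison, whose error is polynomial in $n$ times the small maximal weight of a peak section in the orthonormal basis). Then, by Markov's inequality applied to $\sum_i b^{(i)}(s)$, for $0<t<c_0(r)$,
\[
Prob_n(M^a_n)\ \le\ e^{-t(\frac{L^2}{2}n^2-o(n^2))}\ \prod_{i=1}^{N_n}\mathbb E_n\!\big[e^{t\,b^{(i)}(s)}\big]\ e^{o(n^2)}\ \le\ e^{-t(\frac{L^2}{2}n^2-o(n^2))}\,e^{C_1 r^2 N_n}\,e^{o(n^2)},
\]
and since $N_n\asymp r^{-2}n$ the middle factor is $e^{O(n)}$. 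Fixing $r$ small enough that $c_0(r)>2/L^2$ and then $t\in(2/L^2,c_0(r))$ makes the exponent $\le-(1+\varepsilon)n^2$; absorbing the $e^{o(n^2)}=e^{O(n\log n)}$ corrections (which include a combinatorial factor counting oval-profiles) yields $Prob_n(M^a_n)\le C n^2 e^{-n^2}$ for $n\ge n_0$, as claimed. The gain over \cite{GW} is precisely this last step: each of the $\asymp n$ cells contributes an essentially independent factor that is itself exponentially small in $n$.
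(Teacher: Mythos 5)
Your proposal takes a genuinely different route from the paper --- a local cell decomposition of $\textbf{R}X$ with per-cell oval counts, rather than the paper's global current-theoretic argument --- but as written it has several gaps, at least one of which is fatal in its present form. The core of your argument is the local large-deviation bound $Prob_n\big(b^{(i)}(s)\ge k\big)\le C_1r^2e^{-c_0(r)(k-1)}$, and this is precisely the step you leave unproved (you flag it yourself as ``the principal difficulty''). The event that a cell of side $rn^{-1/2}$ contains $k$ ovals does not visibly reduce to $k$ independent constraints on rescaled Taylor coefficients: those coefficients are highly correlated linear images of the single vector $(c^n_j)$, and without rotational invariance no off-the-shelf small-ball inequality gives the claimed product bound $(1-\delta)^k$. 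The decoupling step is equally unsupported: for sub-Gaussian, non-Gaussian coefficients orthogonality does not imply independence, and no quantitative CLT turns ``nearly orthogonal localizations'' into a product bound on the moment generating function of $\sum_i b^{(i)}(s)$ with only an $e^{o(n^2)}$ multiplicative loss. Finally, the topological bookkeeping is wrong in two places: a component of $\textbf{R}Z_s$ meeting $\bigcup_i\partial Q_i$ can be an arbitrarily small oval straddling a cell boundary, so the bound ``length $\gtrsim rn^{-1/2}$ hence $O(n^{3/2})$ such components'' fails; and ``all but $o(n^2)$ of the ovals lie in the bulk'' does not follow from equidistribution of the currents $\frac1n[Z_{s_n}]$, since small ovals carry negligible mass for the current but can number $\Theta(n^2)$.

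For contrast, the paper's mechanism penalizes many ovals globally rather than locally: if $b_0(\textbf{R}Z_{s_n})\ge g+1-an$ then $g(Z_{s_n}\setminus\textbf{R}Z_{s_n})=O(n)$, so by de Th\'elin's theorem any weak limit of $\frac1n[Z_{s_n}]$ is weakly laminar off $\textbf{R}X$ (Proposition \ref{theo}); since $dd^c\varphi_e$ is nowhere weakly laminar in $Int(D\cap X_h^+)$ (Proposition \ref{prop}), Theorem \ref{mass} forces the mass measures $\frac{1}{d_n}|s_n|^2_{h^{\otimes n}}dV$ of sections in $M^a_n$ (after discarding the event $\|s_n\|_n>d_n$, of probability $O(d_ne^{-d_n})$) to stay an $\epsilon$-distance from $\mu_{\varphi_e}$ against finitely many test functions. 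Each such deviation is a quadratic form in $(c^n_j)$ and is controlled by the Hanson--Wright inequality, yielding $O(e^{-n^2})$ directly; this is where the improved rate comes from, not from any cell-by-cell independence. If you want to pursue your local approach, the two items you must actually supply are the small-ball estimate for the joint law of the local coefficients and a rigorous decoupling (or a replacement for it, e.g.\ a union bound over cells combined with a deterministic statement that some single cell must contain $\gtrsim n$ ovals, which your tiling does give); at present neither is in reach of the tools you cite.
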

	
	In the proof of Theorem \ref{main}, we rely on the observation by Gayet and Welschinger \cite{GW} that  for real holomorphic sections contained in $M^{a}_{n}$ if the corresponding normalized (complex) current of integrations along $Z_{s_n}$ converge to a positive closed (1,1) current $T$ then $T$ is weakly laminar outside the real locus $\textbf{R}X$ (Proposition \ref{theo}). In the current geometric setting, the distribution of the complex zeros is almost surely determined by the extremal current $dd^c\varphi_e$ (Theorem \ref{randommass}). Moreover, $dd^c\varphi_e$ is nowhere weakly laminar in the bulk (Proposition \ref{prop}). In order to estimate the probability of the exceptional set $M^{a}_{n}$ we use mass asymptotics (Theorem \ref{mass}) together with Hanson-Wright Inequality Theorem \ref{HWthm} which leads to a sharper exponential decay rate than the one obtained in \cite[Theorem 1]{GW}.

	\section{Preliminaries}
 
	\subsection{Laminar currents}
	We begin with some definitions and basic results on laminar currents that will be useful in the sequel. Since the definitions are local we fix a domain $\Omega$ in $\Bbb{C}^2$ and let $\mathcal{D}^{p,q}(\Omega)$ be the set of (smooth) test forms with compact support in $\Omega$. The dual space $\mathcal{D}_{p,q}(\Omega)$ is the set of $(p,q)$ currents. A $(1,1)$ current $T$ is called \textit{closed} if $\langle T,d\phi\rangle =0$ for every test 1-form. We say that $T$ is \textit{positive} if   $\langle T,\phi\rangle\geq 0$ for every positive $(1,1)$ test form. We refer the reader to the manuscript \cite{DemBook} for a detailed account of positive closed currents.  
	
	For a positive closed $(1,1)$ current $T$ we denote its (closed) support by $supp(T)$ and let $\|T\|$ be its trace measure and $M(T)$ be its mass norm. For example, if $C$ is a (possibly singular) holomorphic curve in $\Omega$ then the pairing 
	\begin{equation}\label{cint}
		\langle [C],\phi\rangle :=\int_{C_{reg}}\phi\ \ \text{for}\ \phi\in \mathcal{D}^{1,1}(\Omega)
	\end{equation} defines a positive closed $(1,1)$ current in $\Omega$ and called \textit{current of integration} along $C$.	
	
	\begin{defn} A $(1,1)$ current $T$ is called uniformly laminar if for each $x\in supp(T)$ there exists an open set $U$ containing $x$ and biholomorhic to the unit polydisc $\Bbb{D}^2$ and a measurable family of holomorphic functions $f_a:\Bbb{D}\to \Bbb{D}$ together with a measure $d\lambda$ on the set of parameters $\mathcal{A}:=\{f_a(0)\}$ such that the graphs $\Gamma_a=\{(y,f_a(y)):y\in \Bbb{D}\}$ are pairwise disjoint and we have
		$$\langle T, \phi\rangle =\int_{\mathcal{A}}(\int_{\Gamma_a}\phi)d\lambda(a) $$ for each $\phi\in \mathcal{D}^{1,1}(U)$.
	\end{defn}
	
	A uniformly laminar current is necessarily positive and closed. Clearly, a current of integration along a smooth algebraic curve of the form (\ref{cint}) is uniformly laminar. Demailly \cite{Dem82} proved that every positive closed current supported on a $\mathcal{C}^1$ Levi-flat hypersurface in $\Omega$ is uniformly laminar. 
	
	For two positive closed currents $T_1,T_2$ we may write $T_j=dd^cu_j$ for some psh functions on $\Omega$. We say that the exterior product $T_1\wedge T_2$ is \textit{admissible} if $u_1\in L^1_{loc}(\|T_2\|)$. Note that this condition is independent of the choice of the potential $u_1$ for $T_1$. In this case, the current $u_1T_2$ has locally bounded mass and the exterior product
	$$T_1\wedge T_2:= dd^c(u_1T_2)$$ is a positive measure with finite mass. We also remark that the exterior product is symmetric that is $u_1\in L^1_{loc}(\|T_2\|)$ if and only if $u_2\in L^1_{loc}(\|T_1\|)$ and 
	$$T_1\wedge T_2= T_2\wedge T_1. $$ 
	Let $S_j$ be positive closed currents satisfying $S_j\leq T_j$ (that is $T_j-S_j$ is positive) for $j=1,2$. It is well-know that  if $T_1\wedge T_2$ is admissible then $S_1\wedge S_2$ is also admissible and we have $$S_1\wedge S_2 \leq T_1\wedge T_2.$$ We will need the the following result in the sequel (see \cite[§6]{Du05}): 
	
	\begin{prop}\label{unif}
		Let $T=dd^cu$ for some psh $u$ in $\Omega$ for some $u\in L^{\infty}_{loc}(\Omega)$. Assume that $T$ is uniformly laminar. Then $T\wedge T=0$ on $\Omega$.
	\end{prop}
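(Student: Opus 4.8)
The plan is to reduce the global statement to a local computation on a single flow box where the uniformly laminar structure is available, and then exploit the locally bounded potential together with the disjointness of the laminating graphs. First I would fix $x\in supp(T)$ and choose, by the definition of uniform laminarity, an open set $U\ni x$ biholomorphic to $\mathbb{D}^2$, a measurable family of holomorphic maps $f_a:\mathbb{D}\to\mathbb{D}$ with pairwise disjoint graphs $\Gamma_a$, and a measure $d\lambda$ on the parameter set $\mathcal{A}$ such that $\langle T,\phi\rangle=\int_{\mathcal{A}}\big(\int_{\Gamma_a}\phi\big)\,d\lambda(a)$ for all $\phi\in\mathcal{D}^{1,1}(U)$. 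Since $U$ is biholomorphic to a polydisc and a biholomorphism pushes psh functions to psh functions, currents of integration to currents of integration, and preserves the Monge--Amp\`ere operator on bounded psh potentials, it suffices to prove $T\wedge T=0$ on $U$; as $x$ was arbitrary in $supp(T)$ and $T\wedge T$ is trivially zero off $supp(T)$, this gives the claim on all of $\Omega$.

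On the flow box $U\cong\mathbb{D}^2$ I would first record that $T\wedge T$ is admissible: the potential $u$ is locally bounded by hypothesis, hence $u\in L^1_{loc}(\|T\|)$, so $u\,T$ has locally finite mass and $T\wedge T:=dd^c(u\,T)$ is a well-defined positive measure. The key structural input is that a uniformly laminar current carries no mass on any pluripolar set, and more importantly that it can be ``sliced'' along the graphs: writing $T=\int_{\mathcal{A}}[\Gamma_a]\,d\lambda(a)$ as an integral of currents of integration along the pairwise disjoint holomorphic graphs $\Gamma_a$, the wedge product passes inside the integrals, giving
\begin{equation}\label{slicewedge}
T\wedge T=\int_{\mathcal{A}}\int_{\mathcal{A}} [\Gamma_a]\wedge[\Gamma_b]\,d\lambda(a)\,d\lambda(b).
\end{equation}
For $a\neq b$ the graphs $\Gamma_a$ and $\Gamma_b$ are disjoint, so $[\Gamma_a]\wedge[\Gamma_b]=0$ (two disjoint closed sets carry no common mass); for $a=b$, $[\Gamma_a]\wedge[\Gamma_a]$ is the self-intersection of a smooth holomorphic disc, which vanishes because one may choose a bounded psh potential for $[\Gamma_a]$ that is pluriharmonic — hence with zero Monge--Amp\`ere — in a neighborhood of $\Gamma_a$ minus $\Gamma_a$ itself, and the diagonal contributes $\lambda\otimes\lambda$-measure zero to the double integral anyway (or, more robustly, because the intersection of a holomorphic curve with itself through a locally bounded potential is always zero by the Bedford--Taylor calculus, cf.\ Proposition~\ref{unif}'s hypothesis that $u\in L^\infty_{loc}$). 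Either way the right-hand side of \eqref{slicewedge} vanishes.

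The step I expect to be the main obstacle — and the one requiring the most care — is justifying the interchange of the wedge product with the integral over $\mathcal{A}$ in \eqref{slicewedge}, i.e.\ that $dd^c(u\,T)=\int_{\mathcal{A}} dd^c(u\,[\Gamma_a])\,d\lambda(a)$. This is not merely formal: it needs the convergence theorem for Monge--Amp\`ere operators along the approximation of $T$ by finite sums of graph currents, uniform local mass bounds (which hold since $M$ is a mass norm and the $\Gamma_a$ are uniformly bounded in $\mathbb{D}^2$), and the continuity of the operator $(u,S)\mapsto dd^c(uS)$ under monotone or Hausdorff-type convergence of the laminar pieces when $u$ is locally bounded — this is precisely where the hypothesis $u\in L^\infty_{loc}$ is used decisively, as it fails for general psh potentials. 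Once this Fubini-type statement is in hand, the disjointness of the graphs does the rest and the proof concludes; I would cite the slicing and continuity results from \cite[§6]{Du05} and the Bedford--Taylor theory in \cite{DemBook} rather than reprove them.
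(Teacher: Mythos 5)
The paper does not actually prove Proposition \ref{unif}; it is stated as a known result with a bare citation to \cite[\S 6]{Du05}, so there is no in-paper argument to compare yours against. Your skeleton --- localize to a flow box, write $T\wedge T$ as a double integral $\int_{\mathcal{A}}\int_{\mathcal{A}}[\Gamma_a]\wedge[\Gamma_b]\,d\lambda(a)\,d\lambda(b)$, and kill the off-diagonal terms by disjointness of the leaves --- is the standard route and is essentially Dujardin's; deferring the Fubini-type interchange to \cite[\S 6]{Du05} is defensible given that the paper itself only cites that reference.

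There is, however, a genuine gap in your treatment of the diagonal, which is precisely the place where the hypothesis $u\in L^{\infty}_{loc}$ must do its work. The parenthetical claim that $[\Gamma_a]\wedge[\Gamma_a]=0$ ``by the Bedford--Taylor calculus'' because of a locally bounded potential is false as stated: the current $[\Gamma_a]$ has local potential $\log|w-f_a(z)|$, which equals $-\infty$ on $\Gamma_a$ and hence is \emph{not} in $L^1_{loc}(\|[\Gamma_a]\|)$; the self-intersection of a curve current is not an admissible product, and the bounded potential in the hypothesis is a potential of $T$, not of $[\Gamma_a]$. The correct (and necessary) statement is your other alternative --- that the diagonal is $(\lambda\otimes\lambda)$-null --- but this requires proving that $\lambda$ has no atoms, which you do not do. That is exactly what $u\in L^{\infty}_{loc}$ gives: if $\lambda(\{a_0\})=c>0$ then $T\geq c\,[\Gamma_{a_0}]$, so $u-c\log|w-f_{a_0}(z)|$ is psh and $u\leq c\log|w-f_{a_0}(z)|+O(1)$ near $\Gamma_{a_0}$ (equivalently, $T$ has positive Lelong numbers along $\Gamma_{a_0}$), contradicting local boundedness. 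Without this observation the double-integral formula is ill-posed on the diagonal and the argument does not close. Adding the atomlessness lemma repairs the proof; with it, the off-diagonal terms vanish by disjointness and the diagonal is negligible, as you intended.
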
	  
	
	Weakly laminar currents were introduced by Bedford-Lyubich-Smillie \cite{BLS} as a natural generalization of uniformly laminar currents. They arise naturally in pluri-potential theory and complex dynamics. We refer the reader to the papers \cite{BLS,Du} and references therein for more details.
	
	\begin{defn}
		A positive closed $(1,1)$ current $T$ in $\Omega$ is called weakly laminar if for each $\epsilon>0$ there exists a uniformly laminar current current $T_{\epsilon}$ on a subdomains $\Omega_{\epsilon}\subset \Omega$ such that $ T_{\epsilon}\leq T$ and $\|T_{\epsilon}\|(\partial \Omega_{\epsilon})=0$ satisfying $M((T-T_{\epsilon})\rvert_{\Omega_{\epsilon}})<\epsilon$.
	\end{defn}
	
	Equivalently (see \cite{BLS}), $T$ is weakly laminar if there exists a measurable family $(\mathcal{A},\lambda)$ of embedded holomorphic discs $D_a\subset \Omega$ such that for every pair $(a,b)$ the overlap $D_a\cap D_b$ is either empty or an open set in the disc topology and we have
	\begin{equation}
	T=\int_{\mathcal{A}}[D_a]d\lambda(a).
	\end{equation}
	
	The following are examples illustrating the difference between weakly laminar currents and uniformly laminar ones (see \cite{Dem82} and \cite[\S 5]{BLS} for more details):
	
	\begin{example}
		Let $(x,y)$ denote coordinates on $\Bbb{C}^2$ and 
		$$u_1(x,y)=\frac12\log^+(|x|^2+|y|^2)$$
		$$u_2(x,y)=\log^+(\max\{|x|,|y|\})$$ where $\log^+=\max(\log,0)$. The currents $dd^cu_1$ and $dd^cu_2$ are extremal and weakly laminar but they are not uniformly laminar. 
	\end{example}
Gayet and Welschinger \cite[Lemma 1]{GW} observed that a K\"ahler form does not have any laminar structure. We generalize this result to our geometric setting by proving that the extremal current associated with a smooth Hermitian metric does not have any laminar structure in the bulk: 	
	\begin{prop} \label{prop}
		Let $\varphi_e$ be the extremal metric defined by (\ref{def}). Then $dd^{c}\varphi_e$ is nowhere weakly laminar in the set $Int(D\cap X_{h}^+).$ 
	\end{prop}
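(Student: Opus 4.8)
\emph{Plan and reduction.} The plan is to reduce the statement to a self‑intersection computation on the open set $U:=\mathrm{Int}(D\cap X_h^+)$ and then argue by contradiction, in the spirit of \cite[Lemma~1]{GW}, with the K\"ahler form there replaced by the extremal current. First I would note that on $U$ one has $\varphi_e=\varphi$ (because $U\subset D$), and since $\varphi$ is of class $\mathcal C^2$ this gives $dd^c\varphi_e=dd^c\varphi$ as currents on $U$; because $U\subset X_h^+$, this is a \emph{smooth, strictly positive} $(1,1)$-form there. Hence, the ambient manifold being a surface, $(dd^c\varphi_e)^2=(dd^c\varphi)^2$ is a volume form on $U$, so its density with respect to $dV$ is bounded below by a positive constant on every relatively compact open subset. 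It therefore suffices to show that no nonempty open $V\subset U$ can carry a weakly laminar current $dd^c\varphi_e|_V$.

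\emph{Setting up the contradiction.} Assume $dd^c\varphi_e$ is weakly laminar on some nonempty open $V\subset U$. Using the definition of weak laminarity (equivalently, its description through a measurable family of disks, cf. \cite{BLS,Du05}) I would fix a bidisc $B\Subset V$ and, for each $\epsilon>0$, extract a uniformly laminar current $T_\epsilon\le dd^c\varphi_e$ on $B$ with $M\big((dd^c\varphi_e-T_\epsilon)|_B\big)<\epsilon$. Set $R_\epsilon:=(dd^c\varphi_e-T_\epsilon)|_B$, so that $R_\epsilon\ge 0$, $dd^c\varphi_e=T_\epsilon+R_\epsilon$ and $R_\epsilon\le dd^c\varphi_e$ on $B$, with $M(R_\epsilon)<\epsilon$. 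Since $T_\epsilon\le dd^c\varphi_e$ and $\varphi_e$ is bounded on $V$, the local potentials of $T_\epsilon$ are bounded: writing $T_\epsilon=dd^cu_\epsilon$ locally, $\varphi_e-u_\epsilon$ is psh, hence $u_\epsilon\in L^\infty_{loc}(B)$. Proposition \ref{unif} then gives $T_\epsilon\wedge T_\epsilon=0$ on $B$.

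\emph{The estimate.} All exterior products below are admissible and $\wedge$ is bilinear and symmetric, because every current involved has bounded potential; expanding the self‑intersection on $B$ and using $T_\epsilon\wedge T_\epsilon=0$,
\[
(dd^c\varphi_e)^2=T_\epsilon\wedge T_\epsilon+2\,T_\epsilon\wedge R_\epsilon+R_\epsilon\wedge R_\epsilon=2\,T_\epsilon\wedge R_\epsilon+R_\epsilon\wedge R_\epsilon .
\]
By the monotonicity $S_1\wedge S_2\le T_1\wedge T_2$ for $S_j\le T_j$, applied with $T_1=dd^c\varphi_e$ and $T_2=R_\epsilon$, both summands on the right are $\le dd^c\varphi_e\wedge R_\epsilon$, so $(dd^c\varphi_e)^2\le 3\,dd^c\varphi_e\wedge R_\epsilon$ as measures on $B$. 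Since $dd^c\varphi_e=dd^c\varphi$ is smooth on $V$, its coefficients are bounded on $\overline B$, say $dd^c\varphi_e\le C\beta$ on $\overline B$ for a reference K\"ahler form $\beta$, whence $dd^c\varphi_e\wedge R_\epsilon\le C\,\|R_\epsilon\|$ and
\[
\int_{B}(dd^c\varphi_e)^2\le 3\int_{B}dd^c\varphi_e\wedge R_\epsilon\le 3C\,\|R_\epsilon\|(B)=3C\,M(R_\epsilon)<3C\,\epsilon .
\]
Letting $\epsilon\to0$ forces $\int_B(dd^c\varphi_e)^2=0$, which contradicts the reduction step. Hence $dd^c\varphi_e$ is nowhere weakly laminar on $\mathrm{Int}(D\cap X_h^+)$.

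\emph{Main difficulty.} The heart of the matter is the last estimate: one must trap the \emph{fixed}, strictly positive measure $(dd^c\varphi_e)^2$ between $(T_\epsilon)^2=0$ and an arbitrarily small mass. Making this precise requires, first, verifying the hypothesis of Proposition \ref{unif}, i.e. that the approximating uniformly laminar currents $T_\epsilon$ have locally bounded potentials — which uses $T_\epsilon\le dd^c\varphi_e$ together with the boundedness of $\varphi_e$; and second, a Chern--Levine--Nirenberg-type bound $dd^c\varphi_e\wedge R_\epsilon\le C\|R_\epsilon\|$ with $C$ independent of $\epsilon$ — which is exactly where the smoothness of $\varphi$ on $U$, making $dd^c\varphi_e$ a form with bounded coefficients there, is used in an essential way. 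A lesser but genuine technical point is extracting the approximants $T_\epsilon$ on a single fixed bidisc $B$ rather than on $\epsilon$-dependent subdomains, which I would handle by the standard local structure theory of laminar currents (\cite{BLS,Du05}).
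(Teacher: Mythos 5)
Your proof is correct, and the endgame is genuinely different from the paper's. Both arguments share the same skeleton: assume weak laminarity near a point of the bulk, extract uniformly laminar approximants $T_\epsilon\le dd^c\varphi_e$, check that their potentials are locally bounded (your argument via pshness of $\varphi_e-u_\epsilon$ is exactly the paper's ``$u\le u_n\le 0$'' normalization), invoke Proposition \ref{unif} to get $T_\epsilon\wedge T_\epsilon=0$, and contradict the strict positivity of the Monge--Amp\`ere mass in $\mathrm{Int}(D\cap X_h^+)$. Where you diverge is in how the contradiction is extracted. The paper passes to the limit: it cites \cite[Proposition 3.2]{FS} to get $T_n\wedge T_n\to T\wedge T=\mu_{\varphi_e}$ weakly and then uses \cite[Theorem 3.4]{Ber} to bound $\mu_{\varphi_e}(\Omega_n)$ below. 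You instead expand $(T_\epsilon+R_\epsilon)^2$ bilinearly, kill the $T_\epsilon\wedge T_\epsilon$ term, and control the cross terms by a Chern--Levine--Nirenberg bound $dd^c\varphi_e\wedge R_\epsilon\le C\|R_\epsilon\|$, which is legitimate precisely because $\varphi_e=\varphi$ on $\mathrm{Int}(D)$ so that $dd^c\varphi_e$ has (continuous, hence locally bounded) coefficients there --- note it is only continuous, not smooth, since $\varphi$ is merely $\mathcal C^2$, but that is all you use. Your route is more quantitative and avoids the convergence theorem for wedge products, at the cost of needing the explicit identification $dd^c\varphi_e=dd^c\varphi$ on the bulk (which the paper gets implicitly through Berman's theorem). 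The one point you flag yourself --- arranging the approximating subdomains $\Omega_\epsilon$ to contain a fixed bidisc $B$ --- is a real technicality, but the paper's proof makes the exactly analogous assumption (an increasing sequence of subdomains all containing $z$), so you are no worse off; it is resolved by the local structure theory in \cite{BLS,Du05} as you indicate.
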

	\begin{proof}
		Assume on the contrary that $T:=dd^c\varphi_e$ is weakly laminar near some point $z\in \Omega:=Int(D\cap X_{h}^+)$. Then we can find uniformly laminar currents $T_n\leq T$ on an increasing sequence of subdomains such that $z\in \Omega_n\subset \Omega$ and 
		\begin{equation}\label{mas} M((T-T_n)\rvert_{\Omega_n})<\frac1n.
		\end{equation} 
		Since $T_n\to T$ weakly, we may find negative psh potentials $u_n$ (resp. $u$) for $T_n$ (resp. $T$) such that $u_n\to u$ in $L^1_{loc}$. Since $T_n\leq T$ the function $u-u_n$ is psh and we may assume that $u\leq u_n\leq 0$. Thus, $u_nT_n$ has locally bounded mass. Moreover, by \cite[Proposition 3.2]{FS} we have 
		$$T_n\wedge T_n \to T\wedge T=\mu_{\varphi_e} $$ 
		weakly. On the other hand, by \cite[Theorem 3.4]{Ber} there exists $\delta>0$ such that $$\mu_{\varphi_e}(\Omega_n)=\int_{\Omega_n}\det(dd^c\varphi)dV>\delta$$ which contradicts Proposition \ref{unif}.
	\end{proof}
	
	The following result give sufficient conditions for weak laminarity:
	\begin{thm}\label{dTh}[de Th\'elin]
		Let $(C_{n})_{n \in \mathbf{N}}$ be a sequence of smooth holomorphic curves in the unit polydisc in $\Bbb{C}^2$  such that $g(C_{n})=O(A(C_{n})),$ where $g(C_{n})$ stands for the genus of $C_{n}$ and $A(C_{n})$ for its area. If $\frac{1}{A(C_{n})}[C_{n}]$ converges to a positive closed $(1,1)$ current $T$ as $n$ grows to infinity then $T$ is weakly laminar.
	\end{thm}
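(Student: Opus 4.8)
The plan is to verify the equivalent characterization of weak laminarity recalled above (see \cite{BLS}): it suffices, for every $\epsilon>0$, to construct a uniformly laminar current $T_{\epsilon}\le T$ on a subdomain $\Omega_{\epsilon}$ of the polydisc with $\|T_{\epsilon}\|(\partial\Omega_{\epsilon})=0$ and $M\big((T-T_{\epsilon})|_{\Omega_{\epsilon}}\big)<\epsilon$; running this over a sequence $\epsilon\to 0$ and an exhaustion of the polydisc and patching then produces the laminar family.

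The construction of $T_{\epsilon}$ is where the genus hypothesis is used. Fix a generic pair of transverse linear projections $\pi_{1},\pi_{2}\colon\mathbb{C}^2\to\mathbb{C}$ and apply Riemann--Hurwitz to the branched coverings $\pi_{i}|_{C_{n}}\colon C_{n}\to\mathbb{D}$: for connected $C_{n}$ the total ramification of $\pi_{i}|_{C_{n}}$ is $2g(C_{n})-2+2\deg(\pi_{i}|_{C_{n}})$ (with an analogous bound in general), and the degree is $O(A(C_{n}))$ by the Wirtinger inequality, so by hypothesis each $\pi_{i}|_{C_{n}}$ has $O(g(C_{n})+A(C_{n}))=O(A(C_{n}))$ critical points. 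Subdivide the base disc into $\sim 4^{k}$ dyadic squares of side $2^{-k}$ and decompose $C_{n}$ over each square into a near-horizontal part (a disjoint union of graphs of holomorphic functions valued in $\mathbb{D}$, whenever the square avoids the critical values of $\pi_{1}$), a near-vertical part (graphs over $\pi_{2}$), and a remainder made of pieces of intermediate slope or lying over squares meeting a critical value. A pigeonhole over a finite family of projection directions lets one choose $\pi_{1},\pi_{2}$ so that the intermediate-slope pieces carry a negligible fraction of $A(C_{n})$, and the local normal form $z=w^{2}$ at a branch point together with the Riemann--Hurwitz count shows that the area of $C_{n}$ over squares meeting a critical value is $o(A(C_{n}))$ as $k\to\infty$, uniformly in $n$. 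Choosing $k=k(\epsilon)$ large thus makes the total normalized mass of the remainder less than $\epsilon/2$ for all $n$.

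It remains to pass to the limit over $n$ on the union $\Omega_{\epsilon}$ of the good squares. For a fixed good square $Q$ the normalized atomic measures $\frac{1}{A(C_{n})}\sum_{i}\delta_{\Gamma^{n}_{Q,i}}$ on the (Montel-compact) space of holomorphic graphs $\{f\colon Q\to\mathbb{D}\}$ have a weak-$*$ limit $\lambda_{Q}$ along a subsequence, and the associated currents converge to $\int[\Gamma_{f}]\,d\lambda_{Q}(f)$. This limit is uniformly laminar once the graphs in $\mathrm{supp}\,\lambda_{Q}$ are pairwise disjoint, and disjointness is forced by Hurwitz's theorem: two distinct graphs in the support are locally uniform limits of graphs $\Gamma^{n}_{Q,i},\Gamma^{n}_{Q,j}$ belonging to the \emph{same} embedded curve $C_{n}$, so $f^{n}_{i}-f^{n}_{j}$ is nowhere zero on $Q$, whence so is the limit $f_{a}-f_{b}$. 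Patching the $\Omega_{\epsilon}$ and letting $\epsilon\to 0$ yields a weakly laminar current $T'\le T$; since the normalized mass removed at each stage tends to $0$ and $T$ puts no mass on the negligible sets discarded along the way, $T'=T$.

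The step I expect to be the main obstacle is the uniform control of the remainder: arranging the generic projections and the dyadic decomposition so that the non-laminar part has normalized mass $o(1)$ as $k\to\infty$ \emph{independently of $n$}. This is precisely what fails for an unrestricted sequence of curves, which may branch or oscillate on every scale; the Riemann--Hurwitz bound $\#\{\text{critical points}\}=O(A(C_{n}))$, available only because $g(C_{n})=O(A(C_{n}))$, is exactly what keeps the critical locus from spreading a fixed amount of normalized mass across all scales. By comparison, the extraction of the laminar family in the limit is comparatively soft.
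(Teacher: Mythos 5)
The paper offers no proof of this statement: Theorem \ref{dTh} is imported as a black box from de Th\'elin \cite{Th}, so the only meaningful comparison is with de Th\'elin's published argument, which builds on Bedford--Lyubich--Smillie \cite{BLS} and Dujardin \cite{Du}. Your sketch does reproduce the correct architecture of that proof: generic linear projections, dyadic subdivision of the base at scale $2^{-k}$, a Riemann--Hurwitz count of critical points made possible by the hypothesis $g(C_n)=O(A(C_n))$, and Montel plus Hurwitz to extract a uniformly laminar limit of graph components over each good square. That last extraction step is indeed soft and your treatment of it is fine.

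There are, however, two genuine gaps, both located where you yourself suspect. First, Riemann--Hurwitz in the form ``total ramification $=2g-2+2\deg$'' applies to proper maps of compact surfaces; here $C_n$ is an open curve with boundary on $\partial(\mathbb{D}^2)$, the restriction $\pi_i|_{C_n}$ is not proper (sheets escape through the vertical boundary $Q\times\partial\mathbb{D}$), and $\deg(\pi_i|_{C_n})$ is not a single integer but a function of the base point whose \emph{average}, not supremum, is controlled by $A(C_n)$. Making the count $\#\mathrm{crit}(\pi_i|_{C_n})=O(A(C_n))$ rigorous requires shrinking the polydisc and controlling boundary contributions; this is a substantive part of de Th\'elin's paper, not a parenthetical. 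Second, and more seriously, your central quantitative claim --- that the area of $C_n$ lying over squares meeting a critical value is $o(A(C_n))$ uniformly in $n$ as $k\to\infty$ --- is not the right estimate and is not obviously true: a single bad square of side $2^{-k}$ can carry a non-negligible fraction of $A(C_n)$ if many sheets of $C_n$ lie over it, and the local model $z=w^2$ says nothing about that multiplicity. The correct mechanism counts bad \emph{components} rather than bad squares: over a simply connected square avoiding the critical values, every properly projecting component is automatically a graph, so each non-graph component must contain a critical point or meet the vertical boundary; a connected branched cover of the square of degree $d$ with $b$ branch points has $d\le 1+b$, so the excess number of sheets not captured by graphs over each square is bounded by the local critical count, summing to $O(A(C_n))$ over all squares uniformly in $k$. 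One then uses the monotonicity of the graph decomposition under refinement ($T_k\le T_{k+1}\le T$) and two or three projection directions to convert projected mass into area and conclude $M(T-T_k)=O(4^{-k})$. Without this component-counting step your remainder estimate does not close, so the sketch has a gap at its central point.
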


	\section{Quantum Ergodicity for Random Holomorphic Sections}
		\subsection{Subgaussian random variables}\label{subG}
	We recall some basic properties of subgaussian random variables. Assume that $(\Omega, \mathscr{F}, \tau )$ is a probability space. A real valued random variable $X: \Omega \rightarrow \mathbb{R}$
	is called \textit{subgaussian} with parameter $b>0$	if the moment generating function (MGF) of $X$ is dominated by MGF of a normalized Gaussian, that is there exists $b>0$ such that 
	\begin{equation*}
		\mathbb{E}[e^{tX}] \leq e^{\frac{b^{2}t^{2}}{2}}
	\end{equation*}
	for all $t \in \mathbb{R}.$ The classical examples of sub-Gaussian random variables are Standard Gaussian $N(0,1),$ Bernoulli random variables $\mathbb{P}[X=\pm 1]=\frac{1}{2},$ and uniform distribution on $[-1,1].$ We have the following characterizations for subgaussian random variables:
	\begin{prop} \cite[Lemma 5.5]{Ver} \label{ver}
		Let $X$ be a centered real random variable (i.e. $\mathbb{E}[X]=0$). Then the following equations are equivalent: \\
		(1) $\exists b>0$ such that $\mathbb{E}[e^{tX}]\leq e^{\frac{b^{2}t^{2}}{2}}$ for all $t \in \mathbb{R}.$ \\
		(2) $\exists c>0$ such that $\mathbb{P}[|X| > \alpha ]\leq 2e^{-c\alpha^{2}}$ for every $\alpha >0.$ \\
		(3) $\exists K>0$ such that $(\mathbb{E}(|X|^{p})^{\frac{1}{p}}) \leq K\sqrt{p}$ for all $p \geq 1.$ \\
		(4) $\exists \kappa >0$ such that $\mathbb{E}[e^{\frac{X^{2}}{\kappa^{2}}}] \leq 2.$
\end{prop}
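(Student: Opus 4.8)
The plan is to establish the four-fold equivalence by the cyclic chain $(1)\Rightarrow(2)\Rightarrow(3)\Rightarrow(4)\Rightarrow(1)$, noting in advance that only the last implication will genuinely use the centering hypothesis $\mathbb{E}[X]=0$, while the three parameters $b,c,K,\kappa$ will be comparable only up to absolute constants. For $(1)\Rightarrow(2)$ I would run the exponential Markov (Chernoff) bound: for $\alpha>0$ and $t>0$, Markov's inequality applied to $e^{tX}$ together with the hypothesis gives $\mathbb{P}[X>\alpha]\le e^{-t\alpha}\mathbb{E}[e^{tX}]\le e^{-t\alpha+b^2t^2/2}$, and optimizing in $t$ at $t=\alpha/b^2$ yields $\mathbb{P}[X>\alpha]\le e^{-\alpha^2/(2b^2)}$. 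Since $-X$ also satisfies the MGF bound in $(1)$, the same estimate holds for the lower tail, and a union bound produces $(2)$ with $c=1/(2b^2)$.

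For $(2)\Rightarrow(3)$ I would integrate the tail bound via the layer-cake formula $\mathbb{E}[|X|^p]=\int_0^\infty p\alpha^{p-1}\mathbb{P}[|X|>\alpha]\,d\alpha\le 2p\int_0^\infty\alpha^{p-1}e^{-c\alpha^2}\,d\alpha$. The substitution $u=c\alpha^2$ turns the right-hand side into $p\,c^{-p/2}\Gamma(p/2)$, and the Stirling estimate $\Gamma(p/2)^{1/p}=O(\sqrt p)$ gives $(\mathbb{E}[|X|^p])^{1/p}\le K\sqrt p$ after taking $p$-th roots, using that $p^{1/p}$ is bounded on $[1,\infty)$. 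For $(3)\Rightarrow(4)$ I would expand the exponential in a power series and bound it term by term: $\mathbb{E}[e^{X^2/\kappa^2}]=\sum_{k\ge0}\frac{\mathbb{E}[X^{2k}]}{\kappa^{2k}k!}$. The moment bound $(3)$ gives $\mathbb{E}[X^{2k}]\le(K\sqrt{2k})^{2k}=K^{2k}(2k)^k$, and $k!\ge(k/e)^k$ turns the $k$-th term into at most $(2eK^2/\kappa^2)^k$. Choosing $\kappa$ large enough that $2eK^2/\kappa^2\le1/2$ makes the series a geometric sum bounded by $2$, which is exactly $(4)$.

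The main obstacle is $(4)\Rightarrow(1)$, the only step that needs $\mathbb{E}[X]=0$ and that must dispose of the multiplicative constant $2$ present in $(4)$. First, reading off a single term of the series in $(4)$ gives the moment control $\mathbb{E}[X^{2k}]\le 2\kappa^{2k}k!$, whence by Cauchy--Schwarz $\mathbb{E}[|X|^k]\le\sqrt2\,\kappa^k\sqrt{k!}$. I would then Taylor expand the MGF, use centering to kill the linear term, and bound the remainder by $\mathbb{E}[e^{tX}]=1+\sum_{k\ge2}\frac{t^k\mathbb{E}[X^k]}{k!}\le 1+\sqrt2\sum_{k\ge2}\frac{(\kappa|t|)^k}{\sqrt{k!}}$. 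For $|t|$ below a fixed threshold the $k=2$ term is quadratic and the tail is higher order, so this is $1+O(\kappa^2t^2)\le e^{b^2t^2/2}$ for suitable $b$. For $|t|$ above the threshold I would instead use the crude bound obtained directly from $(4)$: the elementary inequality $tX\le X^2/\kappa^2+\kappa^2 t^2/4$ (valid for all real $t$ by AM--GM) together with $(4)$ gives $\mathbb{E}[e^{tX}]\le 2e^{\kappa^2t^2/4}$, and in the large-$|t|$ regime the constant $2$ is absorbed into the exponential, i.e. $2e^{\kappa^2t^2/4}\le e^{b^2t^2/2}$ once $b$ is enlarged. Taking $b$ to be the larger of the two regime constants covers all $t$ and yields $(1)$. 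I expect the delicate bookkeeping to lie precisely in patching the small-$|t|$ and large-$|t|$ estimates so that a single $b$ works uniformly, since the centering hypothesis only helps in the small-$|t|$ window where the factor $2$ cannot otherwise be removed.
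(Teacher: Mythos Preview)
Your cyclic proof is correct and is essentially the standard argument (Chernoff for $(1)\Rightarrow(2)$, layer-cake plus Stirling for $(2)\Rightarrow(3)$, term-by-term series comparison for $(3)\Rightarrow(4)$, and the small-$|t|$/large-$|t|$ split using centering and AM--GM for $(4)\Rightarrow(1)$). There is nothing to compare against, however: the paper does not supply its own proof of this proposition. It is stated purely as a citation of \cite[Lemma~5.5]{Ver} and used as a black box, so the ``paper's proof'' is simply a reference to Vershynin, whose argument is the same one you have reproduced.
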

The last property is known as \textit{$\psi_2$ condition}. Recall that for a centered random variable $X$  the \textit{Orlicz norm} is defined by 
\begin{equation} 
\|X\|_{\psi_{2}}:=\sup_{p\geq 1}p^{-1/2}(\Bbb{E}|X|^p)^{1/p}.
\end{equation} In particular, $X$ is subgaussian if and only if $\|X\|_{\psi_{2}}$ is finite.

\subsection{Hanson-Wright Inequality}
Let $X_{j}$ be independent subgaussian random variables and $\kappa_{j}:=\|X_{j}\|_{\psi_{2}}.$ The joint probability distribution of $X:=(X_{1},...,X_{N})$ is denoted by $\mathbb{P}.$ We also let  $A=[A_{ij}]$ be a square matrix with real entries. We denote its operator norm by
\begin{equation*}
\|A\|:=\max_{\|v\|_{2}\leq 1}\|Av\|.
\end{equation*}
where $\|.\|_{2}$ is the Euclidean norm. We also denote the Hilbert-Schmidt norm by
\begin{equation}\label{HSdef}
\|A\|_{HS}:=(\sum_{i,j} |a_{ij}|^{2})^{\frac{1}{2}}=[Tr(AA^{T})]^{\frac{1}{2}}.
\end{equation}
In \cite{HW}, Hanson-Wright  proved a concentration inequality for the values of a quadratic form
$$ X \rightarrow X^{T}AX$$ acting on a random vector $X$. The following version is due to Rudelson-Vershynin \cite{RV13}:
\begin{thm} (Hanson-Wright Inequality)\label{HWthm}
Assume that $A$ is a $N \times N$ square matrix and $X=(X_{1},...,X_{N}) \in \mathbb{R}^{N}$ is a random vector with components $X_{j}$ being independent subgaussian variables such that
\begin{equation*}
\|X_{j}\|_{\psi_{2}} \leq K
\end{equation*}
for $j=1,2,...,N.$ Then for each $t \geq 0$
\begin{equation*}
\mathbb{P}[|X^TAX-\mathbb{E}[X^TAX]|>t] \\
\leq2\exp\bigg(-c \min\{\frac{t^{2}}{K^{4}\left\| A\right\|_{HS}^{2}} \frac{t}{K^{2}\left\| A\right\|}\}\bigg)
\end{equation*}
where $c>0$ is an absolute constant which does not depend on $t.$
\end{thm}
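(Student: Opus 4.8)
The plan is to establish this following the approach of Rudelson and Vershynin \cite{RV13}: reduce to a symmetric matrix, split the quadratic form into its diagonal and off-diagonal parts, handle the diagonal part with Bernstein's inequality, and handle the off-diagonal chaos through a decoupling step followed by a Gaussian linearization that diagonalizes the problem. First I would normalize: rescaling $X_j\mapsto X_j/K$ replaces $X^{T}AX$ by $K^2$ times a quadratic form in variables with Orlicz norm $\le 1$, so we may assume $K=1$ and translate the exponents back at the end; replacing $A$ by $\tfrac12(A+A^{T})$, which increases neither $\|A\|$ nor $\|A\|_{HS}$, lets us assume $A$ is symmetric. Writing
\[
X^{T}AX-\mathbb{E}[X^{T}AX]=\underbrace{\sum_i A_{ii}(X_i^2-\mathbb{E}X_i^2)}_{=:D}+\underbrace{\sum_{i\neq j}A_{ij}X_iX_j}_{=:S},
\]
a union bound $\mathbb{P}[|D+S|>t]\le\mathbb{P}[|D|>t/2]+\mathbb{P}[|S|>t/2]$ reduces the claim to separate tail bounds for $D$ and for $S$. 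For $D$: since each $X_i$ is subgaussian, the centered variables $X_i^2-\mathbb{E}X_i^2$ are independent and subexponential with $\psi_1$-norm $\le C\|X_i\|_{\psi_2}^2\le C$, so Bernstein's inequality for sums of independent subexponential variables gives $\mathbb{P}[|D|>s]\le 2\exp\bigl(-c\min\{s^2/\sum_i A_{ii}^2,\ s/\max_i|A_{ii}|\}\bigr)$; and $\sum_i A_{ii}^2\le\|A\|_{HS}^2$ while $|A_{ii}|=|e_i^{T}Ae_i|\le\|A\|$, so this is already of the required shape.

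The off-diagonal chaos $S$ is the heart of the matter. Let $\tilde A$ be $A$ with its diagonal zeroed, so $S=X^{T}\tilde AX$, $\|\tilde A\|_{HS}\le\|A\|_{HS}$ and $\|\tilde A\|\le 2\|A\|$. I would bound the moment generating function $\mathbb{E}e^{\lambda S}$ for $|\lambda|$ small. Step one: the decoupling inequality for quadratic chaos (de la Peña–Montgomery-Smith) gives, for every convex $F$, $\mathbb{E}F(X^{T}\tilde AX)\le\mathbb{E}F(4\,X^{T}\tilde AX')$ with $X'$ an independent copy of $X$; taking $F=\exp(\lambda\,\cdot)$ and conditioning on $X$, the inner expectation over $X'$ factors into one-dimensional subgaussian moment generating functions, so $\mathbb{E}e^{\lambda S}\le\mathbb{E}_X\exp\bigl(C\lambda^2\|\tilde A^{T}X\|_2^2\bigr)$. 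Step two (linearization): introduce a standard Gaussian vector $g$ independent of $X$ and use $\mathbb{E}_g e^{\langle g,v\rangle}=e^{\|v\|_2^2/2}$ with $v=\sqrt{2C}\,\lambda\,\tilde A^{T}X$ to write $\mathbb{E}_X\exp(C\lambda^2\|\tilde A^{T}X\|_2^2)=\mathbb{E}_X\mathbb{E}_g\exp\bigl(\sqrt{2C}\,\lambda\langle\tilde A g,X\rangle\bigr)$; swapping the order of integration and now conditioning on $g$, the $X$-expectation is again a product of subgaussian moment generating functions, yielding $\le\mathbb{E}_g\exp\bigl(C'\lambda^2\|\tilde A g\|_2^2\bigr)$. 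The gain is that $g$ is Gaussian: $\|\tilde A g\|_2^2$ has the law of $\sum_k\sigma_k^2 g_k^2$ where the $\sigma_k$ are the singular values of $\tilde A$, hence $\mathbb{E}_g\exp(\mu\|\tilde A g\|_2^2)=\prod_k(1-2\mu\sigma_k^2)^{-1/2}\le\exp\bigl(2\mu\textstyle\sum_k\sigma_k^2\bigr)=\exp\bigl(2\mu\|\tilde A\|_{HS}^2\bigr)$ as long as $2\mu\|\tilde A\|^2\le\tfrac12$. Combining the three steps gives $\mathbb{E}e^{\lambda S}\le\exp\bigl(C_1\lambda^2\|A\|_{HS}^2\bigr)$ for all $|\lambda|\le c_2/\|A\|$, and the usual Chernoff optimization over this range of $\lambda$ produces $\mathbb{P}[|S|>s]\le 2\exp\bigl(-c\min\{s^2/\|A\|_{HS}^2,\ s/\|A\|\}\bigr)$.

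Putting the diagonal and off-diagonal estimates together, adjusting the absolute constants, and undoing the normalization (so that $s$ becomes $t/K^2$, producing the factors $K^4$ with $\|A\|_{HS}^2$ and $K^2$ with $\|A\|$) yields the stated inequality with an absolute constant $c$ — absolute because the only distributional input used is the uniform bound $\|X_j\|_{\psi_2}\le K$ together with the subgaussian and subexponential moment bounds (Proposition \ref{ver}) and the numerical constant in the decoupling inequality. The step I expect to be the main obstacle is the off-diagonal estimate: one has to set up the decoupling correctly and, above all, recognize that routing the subgaussian chaos $\|\tilde A^{T}X\|_2^2$ through an auxiliary Gaussian is exactly what lets one diagonalize and have the two natural spectral quantities appear — $\|A\|_{HS}$ governing the subgaussian (small-$s$) regime and $\|A\|$ governing the exponential (large-$s$) tail.
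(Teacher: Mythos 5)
The paper does not prove this statement: it is quoted directly from Rudelson--Vershynin \cite{RV13} as a known result, so there is no internal proof to compare against. Your sketch is a correct reproduction of the standard argument from that reference (diagonal/off-diagonal split, Bernstein for the diagonal, de la Pe\~na--Montgomery-Smith decoupling plus Gaussian comparison for the off-diagonal chaos, then Chernoff), and it matches the route the cited source actually takes.
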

	\subsection{Bergman Kernel Asymptotics}
	Let $X,L,e^{-\varphi}$ be as in the introduction and $\dim_{\Bbb{C}}X=m$. Recall that the \textit{Bergman kernel} for the $L^2$-space $(H^0(X,L^{\otimes n}), \langle ,\rangle)$ is the integral kernel of the orthogonal projection from $L^{2}$ space of global sections with values in $L^{\otimes n}$ onto $H^0(X,L^{\otimes n}).$ Note that the Bergman kernel satisfies the reproducing property
	\begin{equation*}
		s(x)=\int_{X}\langle s(x), K_{n}(x,y) \rangle_{h^{\otimes n}}dV(y)
	\end{equation*}
	for $s \in H^0(X,L^{\otimes n}).$ For any orthonormal basis $\{S^{n}_{j}\}$ for $H^0(X,L^{\otimes n})$ the Bergman kernel can be obtained as a smooth section
	\begin{equation*}
		K_{n}(x,y)= \sum_{j=1}^{d_{n}} S^{n}_{j}(x) \otimes \overline{S^{n}_{j}(y)}
	\end{equation*}
	of the line bundle $L^{\otimes n} \boxtimes (L^{\otimes n})^{*}$ over $X \times X.$ Note that this representation is independent of the choice of the orthonormal basis $S^{n}_{j}.$ The point-wise norm of the restriction of $K_{n}(x,y)$ to the diagonal is called the \textit{Bergman function}; it is given by
	\begin{equation*}
		k_{n}(x):=|K_{n}(x,x)|=\sum_{j=1}^{d_{n}}|S^{n}_{j}|^{2}_{h^{\otimes n}}.
	\end{equation*}
	We remark that $k_{n}(x)$ has the dimensional density property:
	\begin{equation*}
		\int_{X}k_{n}(x)dV=\dim(H^0(X,L^{\otimes n}))=d_n
	\end{equation*}
	for $n \in \mathbb{N}.$ 
	
	\begin{thm}\cite[Theorem 1.3]{Ber}\label{BerThm}
			Let $X$ be a real projective manifold and $L\to X$ be a real holomorphic line bundle endowed with a real $\mathscr{C}^2$ metric $h$. Then the following weak convergence of measures holds:
		\begin{equation*}
			n^{-m}k_{n}dV \rightarrow \mu_{\varphi_{e}},
		\end{equation*}
Moreover, 
		\begin{equation*}
			n^{-m}k_{n}(x) \rightarrow \textbf{1}_{D \cap X_{h}^+}det(dd^{c}\varphi)(x)
		\end{equation*}
		for almost every $x$ in $X,$ where $X_{h}^+$ is the set where $dd^{c}\varphi >0$.
	\end{thm}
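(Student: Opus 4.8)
The plan is to prove both assertions through the variational description of the Bergman function, coupled with a semiclassical localization that is governed by the equilibrium weight $\varphi_e$ rather than by $\varphi$ itself. First I would record the extremal characterization
\[
k_n(x)=\max_{0\neq s\in H^0(X,L^{\otimes n})}\frac{|s(x)|^2_{h^{\otimes n}}}{\int_X|s|^2_{h^{\otimes n}}\,dV},
\]
which is immediate from the reproducing property of $K_n$ and the Cauchy--Schwarz inequality. This reduces the pointwise analysis of $n^{-m}k_n$ to understanding how sharply a global holomorphic section can peak at $x$ relative to its $L^2$-mass. It is precisely here that the envelope (\ref{def}) enters: even though $\varphi$ need not be psh, a global section obeys the growth dictated by the largest psh weight $\varphi_e\le\varphi$, so $\tfrac1n\log k_n$ should be compared with $\varphi-\varphi_e$, which vanishes exactly on $D$.

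For the upper bound I would use a sub-mean value inequality adapted to the equilibrium weight. Working in a holomorphic frame $e_L$ with $|e_L|^2_h=e^{-2\varphi}$, one dominates $|s(x_0)|^2e^{-2n\varphi(x_0)}$ by an average of $|s|^2e^{-2n\varphi}$ over a ball of radius $\sim n^{-1/2}$ after replacing $\varphi$ by $\varphi_e$; rescaling $z=x_0+w/\sqrt n$ turns $2n\varphi_e$ into the quadratic form $\langle (dd^c\varphi)(x_0)w,w\rangle$ plus lower order terms at points of the contact set $D$, while off $D$ the strict gap $\varphi-\varphi_e>0$ forces exponential decay. Since the model Bergman density on $\mathbb{C}^m$ with a strictly positive quadratic weight equals $\det(dd^c\varphi)(x_0)$ and degenerates to $0$ when the Hessian is not strictly positive, this yields
\[
\limsup_{n\to\infty} n^{-m}k_n(x)\le \mathbf{1}_{D\cap X_h^+}\det(dd^c\varphi)(x)\qquad\text{a.e.}
\]
A matching lower bound is genuinely needed, because the upper bound together with mass matching alone does not force convergence; I would obtain it by constructing concentrated peak sections via Hörmander/Ohsawa--Takegoshi $L^2$-estimates, realizing the model density $\det(dd^c\varphi)(x_0)$ at almost every $x_0\in D\cap X_h^+$. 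Combining the two estimates gives the pointwise a.e. convergence $n^{-m}k_n(x)\to \mathbf{1}_{D\cap X_h^+}\det(dd^c\varphi)(x)$.

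To pass to the measure statement I would pin down the total mass. The dimensional density property gives $\int_X n^{-m}k_n\,dV=n^{-m}d_n$, and by Riemann--Roch (with Kodaira vanishing, as $L$ is ample) $n^{-m}d_n\to\frac{1}{m!}\int_X(dd^c\varphi_e)^m=\mu_{\varphi_e}(X)$, since $dd^c\varphi_e$ is a positive closed $(1,1)$ current representing $c_1(L)$. The contact-set (orthogonality) principle for Monge--Ampère measures of envelopes identifies $(dd^c\varphi_e)^m=\mathbf{1}_D(dd^c\varphi)^m$, which is supported on $S=\overline{X_h^+\cap D}$ and has density $\mathbf{1}_{D\cap X_h^+}\det(dd^c\varphi)$ against $dV$; in particular the integral of the limit density equals $\lim_n\int_X n^{-m}k_n\,dV$. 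Then a.e. convergence of the nonnegative densities together with convergence of their $L^1$-norms yields, by Scheffé's lemma, convergence in $L^1$, from which the weak convergence $n^{-m}k_n\,dV\to\mu_{\varphi_e}$ follows.

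The hard part will be localizing the upper bound against $\varphi_e$ instead of $\varphi$: one must show that global holomorphic sections genuinely feel the equilibrium weight and control the behavior near $\partial D$ and along the degeneracy locus $D\setminus X_h^+$, where $dd^c\varphi\ge 0$ but fails to be strictly positive. Because $h$ is only $\mathcal{C}^2$ and $\varphi$ need not be psh, the Tian--Yau--Zelditch expansion is unavailable, so both the semiclassical model limit and the identity $(dd^c\varphi_e)^m=\mathbf{1}_D(dd^c\varphi)^m$ must be extracted by pluripotential-theoretic means. These two ingredients — the envelope-weighted sub-mean value estimate and the contact-set Monge--Ampère formula — are the crux of the argument.
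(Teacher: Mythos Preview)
The paper does not supply its own proof of this theorem: it is quoted verbatim from Berman \cite[Theorem~1.3]{Ber} and used as a black box, so there is no in-paper argument to compare your proposal against.

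That said, your outline is faithful to Berman's original strategy in \cite{Ber}. The extremal characterization of $k_n$, the local holomorphic Morse upper bound obtained by rescaling against the equilibrium weight $\varphi_e$, the peak-section lower bound via $\bar\partial$-estimates, the contact-set identity $(dd^c\varphi_e)^m=\mathbf{1}_D(dd^c\varphi)^m$, and the passage from pointwise to $L^1$ convergence by matching total masses are exactly the ingredients Berman assembles. Your remark that the lower bound is genuinely needed (and does not follow from the upper bound plus mass matching alone) is well taken: the a.e.\ convergence along the full sequence requires the peak-section construction, and this is indeed how Berman proceeds. The only caveat is that the identification of $\mu_{\varphi_e}(X)$ with the leading Riemann--Roch coefficient and the $\mathcal{C}^{1,1}$ regularity of $\varphi_e$ needed to make the Monge--Amp\`ere manipulations rigorous are themselves nontrivial results from \cite{Ber}, so in a self-contained write-up those would have to be established first.
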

	\subsection{Mass Asymptotics}
Let $X,L,h=e^{-\varphi}$ be as in the introduction. We consider the scalar $L^{2}$- product and the induced norm on the vector space of global holomorphic sections $H^0(X,L^{\otimes n})$ given by
\begin{equation*}
\|s\|^{2}_{n}:=\int_X |s|^{2}_{h^{\otimes n}} dV
\end{equation*} 
where $dV$ denotes the probability volume form induced by $w=c_1(L,h).$ When $(L,h)$ is positively curved, that is $dd^c\varphi>0$, Shiffman and Zelditch \cite{SZ} proved that for a sequence $(s_{n})_n \in H^0(X,L^{\otimes n})$ of global holomorphic sections if their \textit{masses} 
\begin{equation*}
\frac{1}{d_n}|s|^{2}_{h^{\otimes n}} dV \rightarrow dV
\end{equation*}
in the weak-star topology of measures on $X$ then the normalized current of integration $\frac{1}{n}[Z_{s_{n}}]$ also converge to the curvature form $c_1(L,h)$ in the sense of currents. More recently, Zelditch \cite{Zel} obtained a generalization of this result in the setting of partially positive metrics on positive line bundles. However, the proof of \cite[Theorem 1.2]{Zel} had a gap and it was fixed in \cite{Bay20}:
	
\begin{thm}\cite{Zel,Bay20}\label{mass}
		Let $X$ be a real projective manifold and $L\to X$ be a real holomorphic line bundle endowed with a real $\mathscr{C}^2$ metric $h$. Let $(s_n)_n$ be a sequence of global holomorphic sections. Assume that the masses \begin{equation}
			\frac{1}{d_n}|s_n(z)|^2_{h^{\otimes n}}dV \to d\mu_{\varphi_e}\ \text{as}\ n\to \infty
		\end{equation} in the weak-star sense on $S=\overline{X_{h}^+\cap D}.$ Then the normalized currents of integration
		$$\frac1n[Z_{s_n}] \to dd^c\varphi_e$$ in the sense of currents.
		\end{thm}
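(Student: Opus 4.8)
The plan is to run the Shiffman--Zelditch scheme, adapted to the extremal metric: reduce the statement to one about potentials via the Poincar\'e--Lelong formula, use compactness of plurisubharmonic functions, and then identify every subsequential limit by combining the mass hypothesis with the extremal characterization (\ref{def}) of $\varphi_e$. Fix a holomorphic frame $e_L$ of $L$ over an open set $U$ with local weight $\varphi$ and write $s_n=f_n\,e_L^{\otimes n}$ with $f_n$ holomorphic on $U$. Since $|s_n|^2_{h^{\otimes n}}=|f_n|^2e^{-2n\varphi}$, the Poincar\'e--Lelong formula gives, globally on $X$,
\begin{equation*}
\frac1n[Z_{s_n}]=dd^c\psi_n+c_1(L,h),\qquad \psi_n:=\frac1n\log|s_n|_{h^{\otimes n}}\in L^1(X),
\end{equation*}
so it suffices to prove $\psi_n\to\varphi_e-\varphi$ in $L^1(X)$, noting that $\frac1n\log|f_n|=\psi_n+\varphi$ is plurisubharmonic.

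For the upper bound, a standard sub-mean value estimate for $\log|f_n|^2$ on coordinate balls of radius $\sim n^{-1/2}$ gives $|s_n(x)|^2_{h^{\otimes n}}\le Cn^m\|s_n\|_n^2$ with $m=\dim_{\mathbb C}X$; since the mass hypothesis in particular forces $\|s_n\|_n^2=O(d_n)=O(n^m)$, we obtain $\psi_n\le\varepsilon_n$ on $X$ with $\varepsilon_n\to0$. Hence the plurisubharmonic functions $\frac1n\log|f_n|$ are locally uniformly bounded above and the currents $\frac1n[Z_{s_n}]$ have uniformly bounded mass, so the sequence is relatively compact. Passing to a subsequence, $\frac1{n_k}\log|f_{n_k}|\to g$ in $L^1_{loc}$ with $g$ a plurisubharmonic weight, $\frac1{n_k}[Z_{s_{n_k}}]\to T:=dd^cg$; since $\psi_n\le\varepsilon_n$ and $g$ is upper semicontinuous, $g\le\varphi$, and then the extremal characterization (\ref{def}) yields $g\le\varphi_e$ on $X$.

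Next I would pin down $g$ on $S=\overline{X_h^+\cap D}=\operatorname{supp}\mu_{\varphi_e}$. Suppose $\mu_{\varphi_e}(\{g<\varphi_e\})>0$; as $\{g<\varphi_e\}$ is open and $\varphi_e$ is continuous, one may pick $x_*\in\{g<\varphi_e\}\cap S$ and a ball $B=B(x_*,r)$ with $\bar B\subset\{g<\varphi_e\}$, $\mu_{\varphi_e}(B)>0$, $\mu_{\varphi_e}(\partial B)=0$, and $\operatorname{osc}_{\bar B}\varphi$ small. By Hartogs' lemma $\sup_{\bar B}\frac1{n_k}\log|f_{n_k}|\le\sup_{\bar B}g+o(1)\le\sup_{\bar B}\varphi_e-2\delta+o(1)\le\sup_{\bar B}\varphi-2\delta+o(1)$ for a suitable $\delta>0$, hence $\psi_{n_k}\le-\delta$ on $B$ for $k$ large, so $\frac1{d_{n_k}}\int_B|s_{n_k}|^2_{h^{\otimes n_k}}\,dV\le \mathrm{vol}(B)\,d_{n_k}^{-1}e^{-2n_k\delta}\to0$; this contradicts $\frac1{d_{n_k}}|s_{n_k}|^2_{h^{\otimes n_k}}dV\to\mu_{\varphi_e}$, since $\mu_{\varphi_e}(B)>0$ by Theorem \ref{BerThm}. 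The same estimate rules out $g\equiv-\infty$. Thus $g=\varphi_e$ $\mu_{\varphi_e}$-almost everywhere, hence $g=\varphi_e$ on $S$.

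The remaining --- and main --- difficulty is to upgrade ``$g\le\varphi_e$ on $X$ and $g=\varphi_e$ on $S=\operatorname{supp}(dd^c\varphi_e)^m$'' to $g=\varphi_e$ on all of $X$, which then gives $T=dd^c\varphi_e$. This rests on the maximality of $\varphi_e$ off $S$ together with a domination/uniqueness principle for the complex Monge--Amp\`ere operator; the extra ingredient needed to apply it, itself to be drawn from the mass hypothesis, is that $T$ charges no pluripolar set (equivalently, $g$ has vanishing Lelong numbers). This is precisely the point at which the proof of \cite[Theorem 1.2]{Zel} was incomplete and where the repair of \cite{Bay20} enters. Granting it, every subsequential limit of the relatively compact sequence $\frac1n[Z_{s_n}]$ equals $dd^c\varphi_e$, so the whole sequence converges to $dd^c\varphi_e$, as claimed. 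I expect this last identification to be the only real obstacle; everything else is standard potential theory together with the Bergman-kernel estimates recalled above.
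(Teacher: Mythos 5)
The paper offers no proof of Theorem \ref{mass}; it is quoted from \cite{Zel} and \cite{Bay20}, so the only benchmark is the argument in those references, whose overall skeleton (Poincar\'e--Lelong, $L^1_{loc}$-compactness of the potentials $\frac1n\log|f_n|$, the upper bound $g\le\varphi$ forcing $g\le\varphi_e$ via (\ref{def}), and the Hartogs-type argument showing $g=\varphi_e$ on $\operatorname{supp}\mu_{\varphi_e}$) you reproduce correctly. Up to routine details (e.g.\ you should localize at a point of $\operatorname{supp}\mu_{\varphi_e}$ rather than of $S$, since a priori Berman only gives $\operatorname{supp}\mu_{\varphi_e}\subseteq S$), your first three steps are fine.

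The problem is that the fourth step --- which you yourself identify as the main difficulty and the exact location of the gap in \cite{Zel} --- is not proved but ``granted'', and the specific mechanism you propose for closing it does not work. You claim the missing ingredient is that $T=dd^cg$ charges no pluripolar sets, after which a domination/uniqueness principle upgrades ``$g\le\varphi_e$ on $X$ and $g=\varphi_e$ on $\operatorname{supp}\mu_{\varphi_e}$'' to $g=\varphi_e$. This implication is false. Already on $\mathbb{P}^1$ with $K$ the unit circle, $\varphi_e=V_K=\log^+|z|$ and $\mu_{\varphi_e}$ normalized arc length on $|z|=1$: the function $g=\max(\log|z|,-c)$ with $c>0$ satisfies $g\le V_K$ everywhere, $g=V_K$ on $\operatorname{supp}\mu_{\varphi_e}$, and $dd^cg$ (arc length on $|z|=e^{-c}$) charges no polar set, yet $g\ne V_K$ on the annulus $e^{-c}<|z|<1$. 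The domination principle cannot be invoked in the direction you need because its hypothesis must be verified against the Monge--Amp\`ere measure of the \emph{dominating} candidate $g$, about which you know nothing at this stage. What actually rules out such limits is the mass hypothesis used in full strength on all of $X$ --- weak-star convergence of $\frac{1}{d_n}|s_n|^2_{h^{\otimes n}}dV$ to $\mu_{\varphi_e}$ tested against arbitrary continuous functions, not merely the consequence ``$g=\varphi_e$ $\mu_{\varphi_e}$-a.e.'' you extracted from it --- and this is precisely the content of the repair in \cite{Bay20}. As written, the proposal therefore has a genuine gap at the theorem's crux.
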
	
	
	We also have a random version of Theorem \ref{mass} in setting of sub-Gaussian holomorphic sections:	
	\begin{thm}\label{randommass}\cite[Theorem 5.1]{Bay20}
		Let $X$ be a real projective manifold and $L\to X$ be a real holomorphic line bundle endowed with a real $\mathscr{C}^2$ metric $h$. Then for almost every sequence $(s_{n})_{n}$ in $\prod_{n=1}^{\infty}\big(\textbf{R}H^0(X,L^{\otimes n}), Prob_n\big)$ the masses 
		\begin{equation}
			\frac{1}{d_n}|s_n(z)|^2_{h^{\otimes n}}dV \to d\mu_{\varphi_e}
		\end{equation} in the weak-star sense on $S=\overline{X_{h}^+\cap D}.$
	\end{thm}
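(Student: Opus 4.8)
The plan is to promote the deterministic Bergman kernel asymptotics of Theorem~\ref{BerThm} to an almost sure statement by combining them with the Hanson--Wright concentration inequality. The starting point is that, for a continuous function $f$ on $X$, the random variable
\[
Y_n(f):=\int_X f(z)\,\frac{1}{d_n}\,|S_n(z)|^2_{h^{\otimes n}}\,dV(z),\qquad S_n=\sum_{j=1}^{d_n}c_j^n S_j^n,
\]
is a quadratic form in the coefficient vector $c^n=(c_1^n,\dots,c_{d_n}^n)$: expanding $|S_n(z)|^2_{h^{\otimes n}}=\sum_{j,k}c_j^n c_k^n\langle S_j^n(z),S_k^n(z)\rangle_{h^{\otimes n}}$ and symmetrizing (the left side is real and the $c_j^n$ are real) yields $Y_n(f)=(c^n)^T A_n(f)\,c^n$, where $A_n(f)=(A_n^{jk}(f))$ is the real symmetric matrix with entries
\[
A_n^{jk}(f)=\frac{1}{d_n}\int_X f(z)\,\operatorname{Re}\langle S_j^n(z),S_k^n(z)\rangle_{h^{\otimes n}}\,dV(z).
\]
Since the $c_j^n$ are independent, centered and of unit variance, $\mathbb{E}[Y_n(f)]=\operatorname{Tr}A_n(f)=\frac{1}{d_n}\int_X f\,k_n\,dV$, which converges to $\int_X f\,d\mu_{\varphi_e}$ by Theorem~\ref{BerThm} and the dimensional density property. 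As weak-star convergence of the probability measures $\frac{1}{d_n}|S_n|^2_{h^{\otimes n}}dV$ is tested on a countable sup-norm-dense family in $C(X)$, it suffices to prove $Y_n(f)-\mathbb{E}[Y_n(f)]\to 0$ almost surely for each such $f$.

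First I would record two spectral bounds for $A_n(f)$. For a unit vector $v\in\mathbb{R}^{d_n}$ one computes $v^T A_n(f)v=\frac{1}{d_n}\int_X f\,|s_v|^2_{h^{\otimes n}}\,dV$, where $s_v=\sum_j v_j S_j^n$ has $\|s_v\|_n=1$ because $\{S_j^n\}$ is orthonormal; hence $\|A_n(f)\|\le \|f\|_\infty/d_n$. If moreover $f\ge 0$ then $A_n(f)\ge 0$, so
\[
\|A_n(f)\|_{HS}^2=\operatorname{Tr}\big(A_n(f)^2\big)\le \|A_n(f)\|\,\operatorname{Tr}A_n(f)\le \frac{\|f\|_\infty}{d_n}\,\operatorname{Tr}A_n(f),
\]
and $\operatorname{Tr}A_n(f)=\mathbb{E}[Y_n(f)]$ stays bounded in $n$ by the limit above. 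Thus $\|A_n(f)\|$ and $\|A_n(f)\|_{HS}^2$ are both $O(1/d_n)$, while $d_n$ grows like $n^m$ (here $m=2$) by Riemann--Roch.

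Let $K<\infty$ denote the common $\psi_2$-norm of the (identically distributed) coefficients $c_j^n$. For $f\ge 0$ and every fixed $t>0$, Theorem~\ref{HWthm} then gives $Prob_n\big(|Y_n(f)-\mathbb{E}[Y_n(f)]|>t\big)\le 2\exp(-c_t\,d_n)$ for all large $n$, with $c_t>0$. Since $\sum_n e^{-c_t d_n}<\infty$, the Borel--Cantelli lemma gives $\limsup_n|Y_n(f)-\mathbb{E}[Y_n(f)]|\le t$ almost surely, and intersecting over $t=1/k$, $k\in\mathbb{N}$, yields $Y_n(f)\to\int_X f\,d\mu_{\varphi_e}$ almost surely. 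A general $f\in C(X)$ is reduced to the non-negative case by writing $f=(f+\|f\|_\infty)-\|f\|_\infty$, the constant contributing $Y_n(\text{const})=\text{const}\cdot\frac{1}{d_n}\sum_j|c_j^n|^2\to\text{const}$ a.s.\ (again Hanson--Wright, now with $A=\frac{1}{d_n}I$). Finally, intersecting the resulting full-probability events over a countable sup-dense family $\{f_\ell\}\subset C(X)$ and approximating an arbitrary $g\in C(X)$ by $f_\ell$ --- using $\frac{1}{d_n}\int_X|S_n|^2_{h^{\otimes n}}dV\to 1$ a.s.\ to bound $|Y_n(g)-Y_n(f_\ell)|\le\|g-f_\ell\|_\infty\cdot\frac{1}{d_n}\int_X|S_n|^2_{h^{\otimes n}}dV$ --- gives the asserted weak-star convergence of the masses with probability one. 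The delicate point is the Hilbert--Schmidt bound: it is what makes the Hanson--Wright tail summable in $n$, and it truly uses the positivity $A_n(f)\ge 0$ (hence the reduction to $f\ge 0$) together with the uniform control of $\operatorname{Tr}A_n(f)$ from Theorem~\ref{BerThm}; for a sign-changing $f$ one only gets $\operatorname{Tr}(A_n(f)^2)\le\|A_n(f)\|\,\operatorname{Tr}|A_n(f)|$, and $\operatorname{Tr}|A_n(f)|$ need not stay bounded.
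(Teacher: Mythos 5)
Your proof is correct, and it takes essentially the route the paper relies on: the paper itself only cites \cite[Theorem 5.1]{Bay20} for this statement, but the identical machinery --- Hanson--Wright (Theorem~\ref{HWthm}) applied to the quadratic forms $s_n\mapsto\int_X\phi\,|s_n|^2_{h^{\otimes n}}\,dV$, with the operator-norm bound by $\|\phi\|_\infty$, the Hilbert--Schmidt bound $\operatorname{Tr}\big((T_n^{\phi})^2\big)=O(d_n)$, and the trace asymptotics from Theorem~\ref{BerThm} --- appears verbatim in the proof of Theorem~\ref{main}, and your Borel--Cantelli step over a countable dense family is the standard upgrade to an almost-sure statement. (One small remark: your closing caveat is unnecessary, since $-A_n(|f|)\le A_n(f)\le A_n(|f|)$ already gives $\operatorname{Tr}|A_n(f)|\le\operatorname{Tr}A_n(|f|)=O(1)$ for sign-changing $f$; but the positive-part decomposition you use works equally well.)
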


	\section{Proof of Theorem \ref{main}}
	The following result was obtained in \cite{GW}. We provide a proof for the sake of completeness.
	\begin{prop} \label{theo}
		Let $C_{n}$ be a sequence of smooth algebraic curves in $X$ of degree $\rho_n$ satisfying $$g(C_{n})+1-b_{0}(\textbf{R}C_{n})=O(\rho_n).$$ Assume that the normalized currents of integration $\frac{1}{\rho_n}[C_{n}]$ converge weakly to a positive closed $(1,1)$ current $T$ on $X$. Then $T$ is weakly laminar on $X\setminus \textbf{R}X$.
	\end{prop}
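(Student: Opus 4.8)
The plan is to reduce the statement to de Th\'elin's criterion (Theorem \ref{dTh}) applied on small bidiscs away from the real locus, combined with the genus bound coming from the adjunction formula and the hypothesis on $b_0(\textbf{R}C_n)$. The point is that weak laminarity is a local notion, so it suffices to show that near every point $z\in X\setminus\textbf{R}X$ the limit current $T$ is weakly laminar in a neighbourhood $U\cong\Bbb{D}^2$; then one patches the local laminations together using the characterization of weak laminarity via measurable families of holomorphic discs from \cite{BLS}.

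First I would fix $z\notin\textbf{R}X$ and choose holomorphic coordinates identifying a neighbourhood $U$ of $z$ with the unit polydisc $\Bbb{D}^2$ in such a way that $U\cap\textbf{R}X=\emptyset$; this is possible since $\textbf{R}X$ is closed and $z$ lies in its complement. Set $C_n':=C_n\cap U$, a sequence of smooth holomorphic curves in $\Bbb{D}^2$, and let $A(C_n')$ be its area computed with respect to the ambient K\"ahler form used to normalize; by the convergence $\frac1{\rho_n}[C_n]\to T$ one has $A(C_n')\sim c\,\rho_n$ with $c>0$ as soon as $z\in\mathrm{supp}(T)$ (if $z\notin\mathrm{supp}(T)$ there is nothing to prove, $T$ vanishes near $z$ and the trivial lamination works). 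So it remains to control the genus of $C_n'$. Here the key input is the adjunction formula quoted in the introduction, $g(Z_s)=\tfrac12(n^2L^2-nc_1(X)L+2)=O(\rho_n^2)$ with $\rho_n$ of order $n$, together with the hypothesis $g(C_n)+1-b_0(\textbf{R}C_n)=O(\rho_n)$. The crucial observation from \cite{GW} is that the Harnack--Klein-type bound controls the genus of the \emph{complex} curve \emph{restricted to a neighbourhood of a complex (non-real) point}: each real oval contributes to the first Betti number of $\textbf{R}C_n$, and the gap $g(C_n)+1-b_0(\textbf{R}C_n)=O(\rho_n)$ forces $\textbf{R}C_n$ to be ``nearly maximal'', which in turn forces the complex curve $C_n$ to be ``nearly hyperelliptic over the real locus''; away from $\textbf{R}X$ this means the local genus of $C_n'$ is of order $O(\rho_n)=O(A(C_n'))$.

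Granting the local genus bound $g(C_n')=O(A(C_n'))$, I would pass to a subsequence so that $\frac1{A(C_n')}[C_n']$ converges to some positive closed $(1,1)$ current $T'$ on $\Bbb{D}^2$; by the normalization $T'=\frac1c\,T|_U$, and Theorem \ref{dTh} (de Th\'elin) gives that $T'$, hence $T|_U$, is weakly laminar. Since $z\in X\setminus\textbf{R}X$ was arbitrary, $T$ is locally weakly laminar on $X\setminus\textbf{R}X$; finally I would invoke the local-to-global principle: the measurable families of discs furnished on each $U$ can be glued because any two local laminations of the same positive closed current agree on overlaps up to the disc-compatibility condition in the definition, yielding a global measurable family $(\mathcal{A},\lambda)$ of holomorphic discs representing $T$ on $X\setminus\textbf{R}X$.

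The main obstacle is the genus estimate for the \emph{local} complex curve $C_n'$ — i.e. converting the global maximality hypothesis on $b_0(\textbf{R}C_n)$ into the bound $g(C_n\cap U)=O(\rho_n)$ valid for neighbourhoods $U$ disjoint from $\textbf{R}X$. This is precisely the topological heart of \cite[Lemma]{GW}: one uses that a maximal (or near-maximal) real curve, after cutting along its real locus, has small complexity, so the complex curve is built from a controlled number of pieces each of genus $O(\rho_n)$; the patching step is comparatively soft, relying only on the measurable-disc description of weakly laminar currents.
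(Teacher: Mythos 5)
Your overall architecture coincides with the paper's: localize to a ball $B\subset X\setminus \textbf{R}X$, get a lower bound $A(C_n\cap B)\geq c\rho_n$ from the weak convergence $\frac{1}{\rho_n}[C_n]\to T$ (when $B$ meets $\mathrm{supp}(T)$; otherwise the statement is vacuous), establish $g(C_n\cap B)=O(\rho_n)$, and invoke de Th\'elin's criterion (Theorem \ref{dTh}). However, the step you yourself flag as ``the main obstacle'' --- converting the global hypothesis $g(C_n)+1-b_0(\textbf{R}C_n)=O(\rho_n)$ into the local genus bound --- is exactly the mathematical content of the proposition, and your proposal does not actually supply it. The heuristic you offer (``the gap forces $\textbf{R}C_n$ to be nearly maximal, which forces $C_n$ to be nearly hyperelliptic over the real locus'') is not the mechanism and does not yield an estimate. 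The correct argument is an Euler characteristic computation on the cut-open surface: since $\textbf{R}C_n$ is a disjoint union of circles, $\chi(C_n\setminus \textbf{R}C_n)=\chi(C_n)=2-2g(C_n)$; the open surface $C_n\setminus\textbf{R}C_n$ has $r=2b_0(\textbf{R}C_n)$ ends and, by the classical dichotomy for real curves (dividing or not, cf.\ Wilson), at most $2$ connected components; plugging these into $\chi=2b_0-2g-r$ gives $g(C_n\setminus\textbf{R}C_n)\leq 1+g(C_n)-b_0(\textbf{R}C_n)=O(\rho_n)$. Since $B$ is disjoint from $\textbf{R}X$, the capped-off surface $C_n\cap B$ sits inside $C_n\setminus\textbf{R}C_n$ and its genus can only be smaller (genus is monotone under the connected-sum decomposition), whence $g(C_n\cap B)=O(\rho_n)=O(A(C_n\cap B))$. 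Without this computation your proof is incomplete at its central point.

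Two smaller remarks. First, your identification $T'=\frac{1}{c}T|_U$ requires $A(C_n\cap U)/\rho_n$ to converge (e.g.\ choosing $U$ with $\|T\|(\partial U)=0$); for the application of Theorem \ref{dTh} only the lower bound $A(C_n\cap U)\geq c\rho_n$ and convergence of $\frac{1}{A(C_n\cap U)}[C_n\cap U]$ along a subsequence are needed, and the limit is automatically a positive multiple of $T|_U$. Second, the final ``patching'' step is superfluous: weak laminarity as used here is a local property, so establishing it on a neighbourhood of each point of $X\setminus\textbf{R}X$ already gives the conclusion; no gluing of disc families is required, and indeed the paper performs none.
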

	
	\begin{proof}	By Theorem \ref{dTh}, it is enough to prove that for each ball $B \subset X\setminus \textbf{R}X$ we have $$g(C_{n}\cap B)=O(A_{n})$$ where $g(C_{n} \cap B)$ is the genus of the closed surface obtained by capping a disc at each boundary component of $C_{n} \cap B$ and $A_n=Area(C_{n} \cap B)$. To this end, first we observe that $g(C_{n}\backslash \textbf{R}C_{n})=O(\rho_n).$ Indeed, its Euler characteristic satisfies $$\chi(C_{n}\backslash  \textbf{R}C_{n})=\chi(C_{n})-\chi( \textbf{R}C_{n})=2-2g(C_{n}).$$ 
		The genus of the Riemann surface $g(C_{n}\backslash \ \textbf{R}C_{n})$  is given by the formula 
		$$\chi(C_{n}\backslash  \textbf{R}C_{n})=2b_{0}(C_{n}\backslash  \textbf{R}C_{n})-2g(C_{n}\backslash  \textbf{R}C_{n})-r(C_{n}\backslash \ \textbf{R}C_{n})$$ where $r(C_{n}\backslash \textbf{R}C_{n})=2b_{0}(\textbf{R}C_{n})$ is the number of ends of Riemann surface $C_{n}\backslash\textbf{R}C_{n}.$  By the argument on \cite[page 66 ]{Wilson} we have $b_{0}(C_{n}\backslash \textbf{R}C_{n})\leq 2$ which implies that
		\begin{eqnarray*}
			g(C_{n}\backslash \textbf{R}C_{n}) &=& b_{0}(C_{n}\backslash \textbf{R}C_{n})-\frac12\chi(C_{n}\backslash \textbf{R}C_{n})-\frac12r(C_{n}\backslash \textbf{R}C_{n})\\  &\leq& 1+g(C_{n})-b_{0}(\textbf{R}C_{n})\\ &=& O(\rho_n).
		\end{eqnarray*} 
		Now, the Euler characteristic $$\chi(C_{n} \cap B)=2b_{0}(C_{n} \cap B)-2g(C_{n} \cap B)-r(C_{n} \cap B).$$  Since the genus of the orientable surface can only increase under connected sums, we see that $g(C_{n} \cap B) \leq g(C_{n}\backslash \textbf{R}C_{n})=O(\rho_n).$ Finally, since $\frac{1}{\rho_n} [C_n] \to T$ we see that for sufficiently large $n$ we have $A(C_n\cap B)\geq c \rho_n$ for some $c>0$ depending on $T$ and $B$. Thus, the assertion follows from Theorem \ref{dTh}.
	\end{proof}

	\begin{proof}[Proof of Theorem \ref{main}]
First, we observe that $$Prob_n[s_n\in \textbf{R}H^0(X,L^{\otimes n}):\|s_n\|_n>d_n]\leq d_n\exp(-d_n).$$ Indeed, for $s_n:=\sum_{j=1}^{d_n}c_j^nS_j^n$ we have
		\begin{eqnarray} \label{eq}
			Prob_n[\|s_n\|_n>d_n] &=& Prob_n[ \sum_{j=1}^{d_n} |c_j^n|^{2} >d_n^2] \nonumber \\
			& \leq & Prob_{n} [|c_j^n|^{2} >d_{n} \ \ for \ \ some \ \ j] \nonumber \\
			& \leq & 2d_{n} \exp({-d_{n}})=O(d_{n} \exp({-d_{n}})).
		\end{eqnarray}
		Let $R_{n}:=M_n^{a} \cap \{ \|s_n\|_{n} \leq d_n  \}$
		and  $\mathcal{M}(X)$ be the set of Borel measures of total mass at most one. We also let $$ \Omega_{n}:=\{ \nu_{s_n}:\ \text{for some}\ s_n\in R_n \}\subset \mathcal{M}(X)$$ where $\nu_{s_n}:=\frac{1}{d_n}|s_n|^2_{h^{\otimes n}}dV$ denotes the mass of $s_n$. Note that in order to prove (\ref{claim1}), by (\ref{eq}), it is enough to show that $$Prob_{n} ( R_{n}) =O(\exp(-d_n)).$$ 
		To this end, let us consider the compact set $\Omega:= \overline {\cup \Omega_{n}}\subset \mathcal{M}(X)$ where the closure is taken with respect to the weak-star topology. Note that by Proposition \ref{prop} and Theorem \ref{mass} we have $\mu_{\phi_{e}} \notin \Omega.$ Hence, by compactness there exists an $\epsilon>0$ and finitely many test functions $\phi_{j}$ on $S_{\phi} \backslash  \textbf{R} X$ such that 
		for each $\nu_{s_n} \in \Omega$ we have 
		\begin{equation}
			| \frac{1}{d_{n}}X^{\phi_{j}}_{n}(s_{n})-\int_X \phi_{j}d\mu_{\varphi_e}|> \epsilon 
		\end{equation} 
		where
		\begin{equation*}
			X^{\phi_{j}}_{n}(s_{n})= \int_X \phi_{j}|s_n(z)|^2_{h^{\otimes n}}dV. 
		\end{equation*}
		Next, by identifying $s_n\in {\bf{R}}H^0(X,L^n)$ with a vector in $\Bbb{R}^{d_n}$ and using linear algebra one can find an $d_n \times d_n$ symmetric matrix $A^{\phi_{j}}_{n}$ such that
		\begin{equation*}
			X^{\phi_{j}}_{n}(s_{n})= \langle A^{\phi_{j}}_{n}s_{n},s_{n}\rangle=:s^{T}_{n}A^{\phi_{j}}_{n}s_{n}. 
		\end{equation*} 
		where $\langle , \rangle$ is the standard inner product on $\mathbb{R}^{d_n}.$ Observe that $A^{\phi_{j}}_{n}$ is nothing but the matrix representing the Toeplitz operator $T^{\phi_{j}}_{n}:=\pi_{n}M_{\phi_{j}},$ where $\pi$ is the Bergman projection and $M_{\phi_{j}}$ is the multiplication operator induced by $\phi_{j}.$\\
		 By \cite[Proposition 3.8]{Bay20} we have 
		$$\mathbb{E}[X^{\phi_{j}}_{n}]=\int_{X} \phi_{j}k_{n}(z)dV=Tr(T^{\phi_{j}}_{n}).$$
		Then by Theorem \ref{HWthm} there exists an absolute constant $C>0$ such that
		\begin{eqnarray}\label{HW}
			Prob_{n}[|X^{\phi_{j}}_{n}(s_{n})-\mathbb{E}[X^{\phi_{j}}_{n}]|>t ]&=&\mathbb{P}_{n}[|s^{T}_{n}A^{\phi_{j}}_{n}s_{n}-\mathbb{E}[s^{T}_{n}A^{\phi_{j}}_{n}s_{n}]|>t] \nonumber\\
			&\leq& 2\exp\bigg(-C \min\{\frac{t^{2}}{K^{4}\left\| A^{\phi_{j}}_{n}\right\|_{HS}^{2}}, \frac{t}{K^{2}\left\| A^{\phi_{j}}_{n}\right\|}\}\bigg),
		\end{eqnarray}
		where $K:=\left\| c^{n}_{j}\right\|_{\psi_{2}} \geq \frac{1}{\sqrt{2}}.$ Note that the operator norm satisfies $\left\| A^{\phi_{j}}_{n}\right\| \leq \sup_{ z \in X}|\phi_{j}(z)|$. Moreover, since $A^{\phi_{j}}_{n}$ is symmetric by (\ref{HSdef}) and \cite[Proposition 3.3]{Bay20} we have
		\begin{equation*}
			\left\| A^{\phi_{j}}_{n}\right\|_{HS}^{2}=Tr((T^{\phi_{j}}_{n})^{2})=O(d_{n}).
		\end{equation*}
		Furthermore, by Theorem \ref{BerThm} and using $d_n=O(n^2)$ we have
		$$|\int_X \phi_{j}d\mu_{\varphi_e}- \frac{1}{n^2}\mathbb{E}[X^{\phi_{j}}_{n}]| < \frac{\varepsilon}{2} $$ for all $j$ and sufficiently large $n$. Then by applying triangle inequality first and letting $t=\epsilon d_{n}$ in (\ref{HW}) we obtain
		\begin{eqnarray} \label{eqn}
			Prob_{n}[| \frac{1}{d_{n}}X^{\phi_{j}}_{n}(s_{n})-\int_X \phi_{j}d\mu_{\varphi_e}|> \varepsilon]   
			&\leq& Prob_{n}[| \frac{1}{d_{n}}X^{\phi_{j}}_{n}(s_{n})- \frac{1}{n^2}\mathbb{E}[X^{\phi_{j}}_{n}]|> \varepsilon/2] \nonumber  \\
			&= &O (\exp({-n^2}))
		\end{eqnarray}
		Thus, combining (\ref{eq}) and (\ref{eqn}) we deduce that
		$$Prob_n[M^{a}_n]=O(n^2 \exp(-n^2)). $$
	\end{proof}

\end{document}